\documentclass[11pt,reqno]{amsart}
 \usepackage[dvips]{epsfig}
 \usepackage{amsgen, amstext,amsbsy,amsopn, amsthm, amsfonts,amssymb,amscd,amsmat
 h,euscript,enumerate,url,verbatim,calc,xypic}
 \usepackage{hyperref}

 \usepackage{MnSymbol}

 \oddsidemargin -0.2cm
 \evensidemargin -0.2cm
 \textwidth 5.8in
 \parindent 0in

 \usepackage{latexsym}
 \usepackage{graphics}
 \usepackage{color}

\usepackage{wrapfig}

 \newcommand{\m}{\mathfrak{m} }

  \newcommand{\Ass}{\operatorname{Ass}}

  \newcommand{\reg}{\operatorname{reg}}

\def\sdefect{\operatorname{sdefect}}

\theoremstyle{plain}
 \newtheorem{theorem}{Theorem}[section]

 \newtheorem{lem}[theorem]{Lemma}
 \newtheorem{proposition}[theorem]{Proposition}

 \newtheorem{thm}[theorem]{Theorem}
 \newtheorem{cor}[theorem]{Corollary}

 \theoremstyle{definition}

 \newtheorem{defn}[theorem]{Definition}
 
 \newtheorem{example}[theorem]{Example}
 \theoremstyle{remark}

\title[Invariants of the symbolic powers of edge ideals] {Invariants of the symbolic powers of edge ideals }
\author[B. Chakraborty and M. Mandal ]{Bidwan Chakraborty$^\dag$ and Mousumi Mandal$^*$}
\thanks{$^\dag$ Supported by CSIR grant No.: $09/081(1303)/2017\mbox{-EMR-I}$, India}
 \thanks{$^*$ Supported by SERB(DST) grant No.: $\mbox{EMR}/2016/006997$, India}


\address{Department of Mathematics, Indian Institute of Technology Kharagpur, 721302, India} \email{bidwan@iitkgp.ac.in}
 \address{Department of Mathematics, Indian Institute of Technology Kharagpur, 721302, India} \email{mousumi@maths.iitkgp.ac.in}
%

\begin{document}
\maketitle

\begin{abstract}
Let $G$ be a graph and $I=I(G)$ be its edge ideal. When $G$ is the clique sum of two different length odd cycles joined at single vertex then we give an explicit description of the symbolic powers of $I$ and compute the Waldschmidt constant. When $G$ is complete graph then we describe the generators of the symbolic powers of $I$ and compute the Waldschmidt constant and the resurgence of $I$. Moreover for complete graph we prove that  the Castelnuovo-Mumford regularity of the symbolic powers and ordinary powers of the edge ideal coincide.
\end{abstract}

\section{Introduction}
Let $k$ be a field and $R=k[x_1,\ldots ,x_n]$ be a polynomial ring in $n$ variables and $I$ be a homogeneous ideal of $R$. 
Then for $n\geq 1$, the $n$-th symbolic power of $I$ is defined as $I^{(n)}=\displaystyle{\bigcap_{p\in \Ass I}(I^nR_p\cap R)}$. Symbolic powers of an ideal is geometrically an important object  of study as by a classical result of  Zariski and Nagata $n$-th symbolic power of a given ideal consists of the elements that vanish up to order $n$ on the corresponding variety. In general finding the generators of symbolic power is a difficult job. It is easy to see that $I^n\subseteq I^{(n)}$ for all $n\geq 1$. The opposite containment, however, does not hold in general. Much effort has been invested in to determine for which values of $r$ the containment $I^{(r)}\subseteq I^m$ holds. To answer this question C. Bocci and B. Harbourne in \cite{bocci2010} defined an asymptotic quantity known as resurgence which is defined as  $\rho{(I)} =\sup \tiny\{ \frac{s}{t} ~~|~~I^{(s)}\nsubseteq I^t \tiny\}$  and showed that it exists for radical ideals. Since computing the exact value of resurgence is difficult, another asymptotic invariant $ \widehat{\alpha}{(I)} = \displaystyle{\lim_{s\rightarrow\infty}{} \frac{\alpha{(I^{(s)})}}{s}}~,$  known as Waldschmidt constant was introduced, where $\alpha(I)$ denotes the least generating degree of $I$. In order to measure the difference between $I^{(m)}$ and $I^m$, Gattleo et al in  \cite{ggsv} have introduced an invariant known as the symbolic defect which is defined as $ \sdefect(I,m) = \mu\left(\frac{I^{(m)}}{I^m}\right),$ where $\mu(I)$ denotes the minimal number of generators of $I$. It counts the minimal number of generators which must be added to $I^m$ to make $I^{(m)}$.

In this paper we investigate these invariants for edge ideal of graphs. Let $G$ be a simple graph with $n$ vertices $x_1,\ldots ,x_n$ and $I=I(G)$ be the edge ideal generated by $\{x_ix_j|~x_ix_j \mbox{ is an edge of  } G\}$. In \cite{svv} Simis, Vasconcelos and Villarreal have proved that $G$ is a bipartite graph if and only if $I^s=I^{(s)},$ for every $s\in\mathbb{N}$. So it is interesting to study the symbolic powers of edge ideal of non-bipartite graphs. The class of non-bipartite graph which was studied first is odd cycle by Janssen et al in \cite{janssen2017comparing}. They have described the generators of the symbolic powers of the edge ideal of an odd cycle by using the concept of minimal vertex cover and calculated the invariants associated to the symbolic powers of the edge ideal of the same graph. In \cite{gu2018symbolic} Gu et al have extended these results for the unicyclic graph by explicitly computing the generators of the  symbolic powers. Another important invariant in commutative algebra is Castelnuovo-Mumford regularity. Regularity of the edge ideal and their powers has been extensively studied in literature by many researchers while the regularity of the symbolic powers of edge ideals has not been much explored. It has been conjectured by N. C. Minh that for a finite simple graph $G$
$$\reg I(G)^{(s)}=\reg I(G)^s$$
for $s\in \mathbb N$. The conjecture is true for bipartite graph. In \cite{gu2018symbolic} Gu et al have proved the conjecture for odd cycles. Recently in \cite{jayanthan} Jayanthan and Kumar have proved the conjecture for certain class of unicyclic graph and in \cite{fakhari} Seyed Fakhari has solved the conjecture for unicyclic graph.\\

The work of this paper is mainly motivated by the papers \cite{janssen2017comparing} and \cite{gu2018symbolic}. In this paper we study the structure of the symbolic powers of edge ideal of clique sum of two different  odd length cycles joined at a single vertex and prove Minh's conjecture for the class of complete graphs. In section 2, we recall all the definitions and results that will be required for the rest of the paper. Motivated by the concept of vertex weight described in \cite{janssen2017comparing} by Janssen, in section 3, we find the generators of the symbolic powers of edge ideal of clique sum of two different  odd length cycles joined at a single vertex by explicitly choosing a minimal vertex cover. Using the description of the generating set we compute the Waldschmidt constant. In section 4, we describe the generators of the symbolic powers of edge ideal of complete graph and compute the Waldschmidt constant, the resurgence and establish the symbolic defect partially. We close the paper by showing that  for a complete graph $G$ and for all $s\geq 1$,
$$\reg I(G)^{(s)}=\reg I(G)^s.$$

 \vskip 0.3cm
 \section{Preliminaries}
 In this section, we collect the notations and terminologies used in this paper. Throughout the paper, $G$ denotes a finite simple graph over the vertex set $V(G)$ and the edge set $E(G).$ 
 \\

%
%

\begin{defn}
 Let $G$ be a graph.
 \begin{enumerate}
\item A collection of the vertices $W \subseteq V (G)$ is called a vertex cover if for any edge $e\in E(G), W\cap e \neq \phi.$ A vertex cover is called minimal if no proper subset of it is also a vertex cover.
\item The vertex cover number of $G$, denoted by $\tau(G)$, is the smallest size of a minimal vertex cover in G.
\item The graph $G$ is called decomposable if there is a proper partition of its vertices $V(G) = \bigcupdot_{i=1}^{r}{V_i}$ such that $\tau(G) =\sum_{i=1}^{r}{\tau(G[V_i ])}$.
In this case $(G[V_1 ],\dots,G[V_r ])$ is called a decomposition of G. If G is not decomposable then G is said to be indecomposable.

\end{enumerate}

\end{defn}


 \begin{defn}
   Let $V' \subseteq V(G)=\tiny\{x_1,x_2,\ldots,x_n\tiny\}$ be a set of vertices. For a monomial $ x^{\underline{a}} \in k[x_1,\ldots,x_n]$ with exponent vector $\underline{a} = (a_1,a_2,\ldots ,a_n)$ define the vertex weight $W_{V'}{(x^{\underline{a}})}$ to be $$ W_{V'}{(x^{\underline{a}})} := \sum_{x_i\in V'}{a_i}. $$
 \end{defn}

Now we recall some results which  describe the symbolic powers of the edge ideal in terms of minimal vertex covers of the graph.

\begin{lem}\cite[Corollary 3.35]{vantuylbook}
Let $G$ be a graph on vertices $\{x_1,\ldots,x_n\}, I=I(G)\subseteq k[x_1,\ldots,x_n]$ be the edge ideal of $G$ and $V_1,\ldots,V_r$ be the minimal vertex covers of $G.$ Let $P_j$ be the monomial prime ideal generated by the variables in $V_j.$ Then $$I=P_1 \cap\cdots\cap P_r$$ and $$I^{(m)}=P_1^{m} \cap\cdots\cap P_r^{m}.$$
\end{lem}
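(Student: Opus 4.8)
The plan is to exploit the fact that $I=I(G)$ is a squarefree monomial ideal, hence radical and without embedded associated primes. First I would identify the primes minimal over $I$. A monomial prime $P_S=(x_i: i\in S)$ contains $I$ if and only if every generator $x_ix_j$ lies in $P_S$, that is, every edge has at least one endpoint in $S$; this is precisely the condition that $S$ be a vertex cover of $G$. Thus the primes minimal over $I$ are exactly the $P_j$ attached to the minimal vertex covers $V_j$, and since $I$ is squarefree these are all of its associated primes, so $\Ass(R/I)=\Min(I)=\{P_1,\dots,P_r\}$. Because a radical ideal is the intersection of its minimal primes, this gives $I=P_1\cap\cdots\cap P_r$ at once.

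For the second equality I would start from the definition $I^{(m)}=\bigcap_{P\in\Ass(R/I)}\bigl(I^mR_P\cap R\bigr)$, which by the previous paragraph reads $I^{(m)}=\bigcap_{j=1}^r\bigl(I^mR_{P_j}\cap R\bigr)$. The key localization step is to compute $IR_{P_j}$: localizing $I=\bigcap_k P_k$ at $P_j$, each factor $P_k$ with $k\neq j$ satisfies $P_k\nsubseteq P_j$ (distinct minimal primes are incomparable), so $P_kR_{P_j}=R_{P_j}$ and hence $IR_{P_j}=P_jR_{P_j}$. Raising to the $m$-th power yields $I^mR_{P_j}=P_j^mR_{P_j}$, whence $I^mR_{P_j}\cap R=P_j^mR_{P_j}\cap R=P_j^{(m)}$.

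It then remains to establish the crucial fact that $P_j^{(m)}=P_j^m$, i.e.\ that each power of a prime generated by a subset of the variables is primary; this is where I expect the real work to lie. Writing $P_j=(x_{i_1},\dots,x_{i_s})$, the variables $x_{i_1},\dots,x_{i_s}$ form a regular sequence in the Cohen-Macaulay ring $R$, so the associated graded ring $\bigoplus_m P_j^m/P_j^{m+1}$ is a polynomial ring over the domain $R/P_j$ and is in particular torsion-free; this forces $\Ass(R/P_j^m)=\{P_j\}$, so $P_j^m$ is $P_j$-primary and $P_j^{(m)}=P_j^mR_{P_j}\cap R=P_j^m$. Substituting back gives $I^{(m)}=\bigcap_{j=1}^r P_j^{(m)}=\bigcap_{j=1}^r P_j^m$, as claimed. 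The only delicate points are that squarefreeness rules out embedded primes, so that the symbolic power is genuinely computed over exactly the $P_j$, and the primariness of $P_j^m$; both are standard but merit the explicit justification above.
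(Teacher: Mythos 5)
Your proof is correct. Note that the paper itself offers no proof of this statement: it is quoted verbatim from \cite[Corollary 3.35]{vantuylbook}, so there is nothing internal to compare against; your argument is the standard one and is essentially what appears in the cited source. All the key steps check out: the identification of the minimal primes of $I$ with minimal vertex covers, the absence of embedded primes for a squarefree (hence radical) monomial ideal, the localization computation $I^mR_{P_j}=P_j^mR_{P_j}$ using incomparability of distinct minimal primes, and the primariness of $P_j^m$ via the associated graded ring. One small wording quibble: what you need from $\gr_{P_j}(R)\cong (R/P_j)[T_1,\dots,T_s]$ is that it is a \emph{domain} (so that the order function is additive and $ab\in P_j^m$, $b\notin P_j$ forces $a\in P_j^m$), not merely that it is torsion-free; with that phrasing adjusted the argument is complete.
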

In the next lemma, the elements in the symbolic power of edge ideals have been described in terms of minimal vertex cover of the graph.

\begin{lem}\cite[Lemma 2.6]{bocci2016}\label{psymbolic}
Let $I \subseteq R$ be a square free monomial ideal with minimal primary decomposition $I=P_1\cap\cdots \cap P_r ~~with ~~P_j = (x_{j_1},\dots ,x_{j_{s_j}})$ for $j= 1,\ldots ,r$. Then $ {x_1^{a_1}}\cdots {x_n^{a_n}} \in I^{(m)} \mbox{ if and only if } a_{j_1}+\dots +a_{j_{s_j}}\geq m$ for $j=1,\dots,r$.

\end{lem}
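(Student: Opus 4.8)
The plan is to reduce the membership question for $I^{(m)}$ to a membership question for each ordinary power $P_j^m$ separately, and then to settle the latter by an elementary divisibility argument on monomials. Writing $u = x^{\underline{a}} = x_1^{a_1}\cdots x_n^{a_n}$, the target equivalence factors through the two reductions: first $u \in I^{(m)} \iff u \in P_j^m$ for every $j$, and then $u \in P_j^m \iff a_{j_1}+\cdots +a_{j_{s_j}}\geq m$.

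First I would establish the decomposition $I^{(m)} = \bigcap_{j=1}^r P_j^m$. Since $I$ is squarefree it is radical, so the associated primes of $R/I$ are exactly the minimal primes $P_1,\ldots,P_r$ in the given decomposition, and there are no embedded primes. Localizing at $P_j$, the minimality of $P_j$ over $I$ together with $I$ being radical gives $I R_{P_j} = P_j R_{P_j}$, whence $I^m R_{P_j} = P_j^m R_{P_j}$. Because each $P_j$ is generated by a subset of the variables, $R/P_j$ is again a polynomial ring and $P_j^m$ is $P_j$-primary, so $P_j^m R_{P_j}\cap R = P_j^m$. Substituting into $I^{(m)} = \bigcap_j \bigl(I^m R_{P_j}\cap R\bigr)$ then yields $I^{(m)} = \bigcap_{j=1}^r P_j^m$. (In the edge-ideal setting this is precisely the content of the preceding lemma, so one may instead invoke it directly.)

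With this in hand it suffices to characterize monomial membership in a single $P_j^m$. For the ``if'' direction, if $a_{j_1}+\cdots +a_{j_{s_j}}\geq m$ then $u$ is divisible by $x_{j_1}^{a_{j_1}}\cdots x_{j_{s_j}}^{a_{j_{s_j}}}$, a monomial of degree $\geq m$ supported on the variables of $P_j$, which is itself divisible by some degree-$m$ monomial in those variables, i.e.\ by a generator of $P_j^m$; hence $u \in P_j^m$. For the ``only if'' direction, $P_j^m$ is a monomial ideal, so a monomial lying in $P_j^m$ must be divisible by one of its generators $x_{j_1}^{b_1}\cdots x_{j_{s_j}}^{b_{s_j}}$ with $b_1+\cdots +b_{s_j}=m$; comparing exponents gives $a_{j_k}\geq b_k$ for each $k$, and therefore $a_{j_1}+\cdots +a_{j_{s_j}}\geq b_1+\cdots +b_{s_j}=m$. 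Intersecting over all $j$ delivers the stated equivalence.

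The only step that requires genuine care is the reduction $I^{(m)} = \bigcap_j P_j^m$, and within it the identity $P_j^m R_{P_j}\cap R = P_j^m$, i.e.\ that each $P_j^m$ is $P_j$-primary; this is exactly where the hypothesis that every $P_j$ is generated by variables is essential. By contrast, the combinatorial characterization of monomial membership in $P_j^m$ is routine, being a direct consequence of the fact that a monomial lies in a monomial ideal precisely when it is a multiple of one of its generators.
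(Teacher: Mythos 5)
Your proof is correct and complete. The paper itself gives no argument for this lemma --- it is quoted directly from Bocci et al.\ \cite{bocci2016} --- and your two-step reduction (first $I^{(m)}=\bigcap_j P_j^m$ via the radicality of $I$, the minimality of the $P_j$, and the fact that powers of a variable-generated prime are primary; then the elementary divisibility characterization of monomial membership in $P_j^m$) is exactly the standard argument underlying the cited result.
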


 Using Lemma \ref{psymbolic} and the concept of vertex weight Janssen et al in  \cite{janssen2017comparing} described  the elements of symbolic powers of edge ideals as follows
$$ I^{(t)} = (\{ x^{\underline{a}} ~|\mbox{ for all minimal vertex covers }V^{\prime}, W_{V^{\prime}}(x^{\underline{a}})\geq t \}).$$
Further they have divided the elements of the symbolic powers of edge ideals into two sets written as  $I^{(t)}=(L(t))+(D(t)),$ where
  $$L(t) =  \{ x^{\underline{a}}~ | \deg(x^{\underline{a}})\geq 2t\mbox{ and for all minimal vertex covers } V^{\prime}, W_{V^{\prime}}(x^{\underline{a}})\geq t \}$$ and
$$D(t) = \{ x^{\underline{a}}~ | \deg(x^{\underline{a}})< 2t\mbox{ and for all minimal vertex covers } V^{\prime}, W_{V^{\prime}}(x^{\underline{a}})\geq t \}.$$
Thus for any graph, if we are able to identify the elements in $L(t)$ and $D(t)$ then we will be able to describe $I^{(t)}$.
\begin{defn}
 Let $G$ be a graph with $n$-vertices and let $v=(v_1 ,\dots,v_n )\in \mathbb{N}^{n} .$ For a vertex $x \in V (G),$ let $N_G (x)=\{y \in V (G)~|~ \{x,y\}\in E(G)\}$ be its neighborhood.
 \begin{enumerate}
\item The duplication of a vertex $x\in V(G)$ in $G$ is the graph obtained from $G$ by adding a new vertex $x^{\prime}$ and all edges $\{x^{\prime} ,y\}$ for $y \in N_G(x).$
\item The parallelization of G with respect to $v$, denoted by $G^v$ , is the graph obtained from $G$ by deleting the vertex $x_i$ if $v_i = 0$, and duplicating $v_i-1$ times the vertex $x_i$ if $v_i \neq 0.$
\end{enumerate}
\end{defn}

A commonly-used method in commutative algebra when investigating (symbolic) powers
of an ideal is to consider its (symbolic) Rees algebra.

\begin{defn} Let $R$ be a ring and $I$ be an ideal of $R.$ The Rees algebra, denoted by $\mathcal{R}(I),$ and the symbolic Rees algebra, denoted by $\mathcal{R}_s(I),$ of $I$ are defined as $$\mathcal{R}(I):= \displaystyle {\bigoplus_{n\geq 0}{I^nt^n} }\subseteq R[t] \mbox{ and } \mathcal{R}_s (I) :=\displaystyle {\bigoplus_{n\geq 0}{I^{(n)}t^n}} \subseteq R[t].$$

\end{defn}

 If $I=I(G)$ is the edge ideal of a graph then the generators of $\mathcal{R}_s(I)$ can be described by indecomposable graphs arising from $G.$ The following characterization for $\mathcal{R}_s(I)$ was given in \cite{bernal}.

 \begin{thm}\label{1implosive}
 Let $G$ be a graph over the vertex set $V(G)=\{x_1,\dots,x_n\}.$ Let $I=I(G)$ be its edge ideal. Then $$\mathcal{R}_s(I)=k[x^vt^b|\mbox{ $G^v$ is an indecomposable graph and } b=\tau(G^v) ],$$ where $v=(v_1,\dots,v_n)$ and $ x^vt^b=x_1^{v_1}\cdots x_n^{v_n}t^b.$
 \end{thm}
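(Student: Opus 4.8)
The plan is to realize $\mathcal{R}_s(I)$ as a monomial (affine semigroup) $k$-algebra and then identify its irreducible monomial generators. Since $I=I(G)$ is a squarefree monomial ideal, each symbolic power $I^{(m)}=P_1^m\cap\cdots\cap P_r^m$ is again a monomial ideal, and by Lemma \ref{psymbolic} together with the vertex-weight description recalled above, a monomial $x^v$ lies in $I^{(m)}$ if and only if $w_{V'}(x^v)\geq m$ for every minimal vertex cover $V'$ of $G$, i.e. $\min_{V'}w_{V'}(x^v)\geq m$. The technical heart of the argument is then the combinatorial identity
$$\tau(G^v)=\min\{\,w_{V'}(x^v)\mid V'\text{ a minimal vertex cover of }G\,\}.$$
For the inequality $\leq$, given a minimal vertex cover $V'$ of $G$ I place all $v_i$ copies of each $x_i\in V'$ into $G^v$; this is a vertex cover of $G^v$ of size $w_{V'}(x^v)$. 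For $\geq$, given any vertex cover $W$ of $G^v$, the fact that the copies of a vertex are twins forces, for every edge $\{x_i,x_j\}$ of $G$, that either all copies of $x_i$ or all copies of $x_j$ lie in $W$ (otherwise an edge between two omitted copies would be uncovered); the set of vertices whose copies all lie in $W$ is then a vertex cover $V'$ of $G$ with $|W|\geq w_{V'}(x^v)$. Consequently $\mathcal{R}_s(I)$ is exactly the $k$-algebra spanned by the monomials $x^vt^b$ with $b\leq\tau(G^v)$.

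Next I record that these exponents form a semigroup. The min-formula makes $\tau$ superadditive, since $\tau(G^{v+w})=\min_{V'}\bigl(w_{V'}(x^v)+w_{V'}(x^w)\bigr)\geq\tau(G^v)+\tau(G^w)$, so the product of two admissible monomials is admissible and, by induction on total degree, $\mathcal{R}_s(I)$ is generated as a $k$-algebra by the irreducible elements of this semigroup. I then reduce to the top degree $b=\tau(G^v)$: if $b<\tau(G^v)$ and $v\neq 0$, I pick $i$ with $v_i\geq 1$, and since $w_{V'}(x^{v-e_i})\geq w_{V'}(x^v)-1$ gives $\tau(G^{v-e_i})\geq\tau(G^v)-1\geq b$, the factorization $x^vt^b=x_i\cdot(x^{v-e_i}t^b)$ is a nontrivial product of admissible monomials, so $x^vt^b$ is reducible. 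Hence every minimal generator already has $b=\tau(G^v)$.

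The main step is to match factorizations of $x^vt^{\tau(G^v)}$ with decompositions of $G^v$. Writing $v=v^{(1)}+v^{(2)}$ distributes the copies of each vertex between two parts $U_1,U_2$ of $V(G^v)$, and the induced subgraph on $U_i$ is precisely $G^{v^{(i)}}$ no matter how the twins are split; thus a proper decomposition of $G^v$ is the same datum as an expression $v=v^{(1)}+v^{(2)}$ with both summands nonzero and $\tau(G^v)=\tau(G^{v^{(1)}})+\tau(G^{v^{(2)}})$. If $G^v$ is decomposable, this yields the nontrivial factorization $x^vt^{\tau(G^v)}=x^{v^{(1)}}t^{\tau(G^{v^{(1)}})}\cdot x^{v^{(2)}}t^{\tau(G^{v^{(2)}})}$, so $x^vt^{\tau(G^v)}$ is reducible. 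Conversely, superadditivity forces any factorization to be tight: in $x^vt^{\tau(G^v)}=(x^wt^c)(x^{v-w}t^{\tau(G^v)-c})$ the chain $\tau(G^v)=c+(\tau(G^v)-c)\leq\tau(G^w)+\tau(G^{v-w})\leq\tau(G^v)$ must be all equalities, giving a $\tau$-additive splitting, and nontriviality of the two factors forces both $w$ and $v-w$ to be nonzero, hence a genuine decomposition of $G^v$. Therefore the irreducible generators are exactly the $x^vt^{\tau(G^v)}$ with $G^v$ indecomposable, which is the asserted description. The subtle point to handle with care is that a vertex cover of $G^v$ need not treat twin copies uniformly at the outset; establishing the min-formula and the cluster-wise description of decompositions is where the real content lies, while the reduction to $b=\tau(G^v)$ and the factorization bookkeeping are then formal consequences.
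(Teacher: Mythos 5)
Your argument is essentially correct, but note that the paper does not prove this statement at all: it is quoted verbatim from \cite{bernal}, so there is no internal proof to compare against. What you have written is a reasonable self-contained derivation in the spirit of the original reference: the identity $\tau(G^v)=\min_{V'}w_{V'}(x^v)$ over minimal vertex covers of $G$, combined with Lemma \ref{psymbolic}, does show that $x^vt^b\in\mathcal{R}_s(I)$ iff $b\leq\tau(G^v)$, and your superadditivity/tight-factorization bookkeeping correctly matches irreducible semigroup elements with indecomposable parallelizations. Two places deserve slightly more care. First, in the ``$\geq$'' half of the min-formula, the set of vertices all of whose copies lie in $W$ need not cover edges $\{x_i,x_j\}$ of $G$ with $v_i=0$ (such edges simply disappear in $G^v$); you must enlarge your candidate cover by the weight-zero vertices $\{x_i\mid v_i=0\}$ and then pass to a minimal vertex cover inside it, which costs nothing in weight. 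Second, the definition of decomposability in the paper allows partitions into $r\geq 2$ parts, while your correspondence is stated for two parts; this is harmless because subadditivity of $\tau$ on induced subgraphs ($\tau(G[A\cup B])\leq\tau(G[A])+\tau(G[B])$) lets one merge parts and shows $r$-part decomposability is equivalent to $2$-part decomposability, but the step should be said. You should also observe that the twin copies of a vertex are mutually nonadjacent, which is what guarantees that any splitting of the copies induces subgraphs isomorphic to $G^{v^{(1)}}$ and $G^{v^{(2)}}$; you assert this and it is true, but it is the hinge of the whole correspondence. With these points filled in, the proof stands.
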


 \begin{defn}
Let $G$ be a simple graph of $n$ vertices with edge ideal $I=I(G).$ Then the graph $G$ is called an implosive graph if the symbolic Rees algebra $\mathcal{R}_s(I)$ is generated by monomials of the form $x^vt^b,$ where $v\in\{0,1\}^n.$
 \end{defn}
Next two theorems gives a class of implosive graphs.

\begin{thm}\cite[Theorem 2.3]{flores}\label{pcycle}
If $G$ is a cycle, then $G$ is implosive.
\end{thm}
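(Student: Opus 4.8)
The plan is to apply Theorem \ref{1implosive}, which presents $\mathcal{R}_s(I)$ as generated by the monomials $x^v t^{\tau(G^v)}$ ranging over all $v$ for which $G^v$ is indecomposable. To prove that $G$ is implosive it suffices to show that each such generator already lies in the subalgebra $A$ generated by those generators whose exponent vector lies in $\{0,1\}^n$; equivalently, that whenever some entry $v_i \ge 2$ the monomial $x^v t^{\tau(G^v)}$ factors as a product of elements of $\mathcal{R}_s(I)$ coming from $0/1$ vectors. First I would dispose of the even case: if $G = C_{2k}$ then $G$ is bipartite, so by \cite{svv} one has $I^{(s)} = I^s$ for all $s$, whence $\mathcal{R}_s(I) = \mathcal{R}(I)$ is generated in degree one by the edge monomials $x_i x_{i+1} t$, each of which has exponent vector in $\{0,1\}^n$. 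Thus even cycles are implosive outright.

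The substantive case is the odd cycle $G = C_{2k+1}$, where I claim $\mathcal{R}_s(I) = k[\,x_i x_{i+1} t\ (1 \le i \le 2k+1),\ x_1 x_2 \cdots x_{2k+1}\, t^{k+1}\,]$. All of these generators have $0/1$ exponent vectors (the edges, and the full-cycle monomial, recalling $\tau(C_{2k+1}) = k+1$), so establishing this equality finishes the proof. To prove it I would show that every minimal monomial generator $x^{\underline a}$ of every $I^{(t)}$ contributes an element $x^{\underline a} t^t$ of the right-hand algebra, using the splitting $I^{(t)} = (L(t)) + (D(t))$ of \cite{janssen2017comparing}. For a generator in $L(t)$, i.e.\ with $\deg(x^{\underline a}) \ge 2t$, the governing fact for odd cycles is that such an element already lies in the ordinary power $I^t$; hence $x^{\underline a} t^t \in \mathcal{R}(I)$ and is a product of edge monomials. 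For a generator in $D(t)$, i.e.\ with $\deg(x^{\underline a}) < 2t$, I would prove a peeling lemma: the deficiency in degree forces $x_1 x_2 \cdots x_{2k+1}$ to divide $x^{\underline a}$, and the quotient lies in $I^{(t-(k+1))}$. Writing $x^{\underline a} t^t = \bigl(x_1 \cdots x_{2k+1} t^{k+1}\bigr)\bigl((x^{\underline a}/x_1\cdots x_{2k+1})\, t^{t-(k+1)}\bigr)$ and inducting on $t$ then places $x^{\underline a} t^t$ in the subalgebra as well.

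The main obstacle is the peeling lemma for the set $D(t)$: one must show that a monomial of degree less than $2t$ whose weight on every minimal vertex cover is at least $t$ is necessarily divisible by the full-cycle monomial, and that dividing it out lands in the next symbolic power. This is exactly where the arithmetic special to odd cycles enters, since every minimal vertex cover has size $k+1$ while the independence number is $k$, and the deficiency $2t - \deg(x^{\underline a})$ must be controlled by counting, across the cyclically shifted covers, the variables missing from $x^{\underline a}$; I expect this counting to be the delicate step and the point at which the parity of the cycle is used. I would also flag a conceptual pitfall that shapes the whole argument: indecomposability of $G^v$ does \emph{not} force $v \in \{0,1\}^n$, since, for instance, a parallelization of $C_{2k+1}$ that duplicates a single vertex can reproduce a $4$-cycle, which is itself indecomposable; the corresponding generator $x^v t^{\tau(G^v)}$ is nonetheless redundant, being a product of two edge monomials. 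Hence the proof must proceed by factoring monomials in $\mathcal{R}_s(I)$ rather than by attempting to show that every indecomposable parallelization has a $0/1$ exponent vector.
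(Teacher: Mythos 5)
The paper does not actually prove this statement: it is imported verbatim from \cite[Theorem 2.3]{flores}, so there is no internal argument to compare yours against, and your attempt has to be judged on its own. Your architecture is reasonable and the facts you lean on are true: even cycles are bipartite, so $I^{(s)}=I^s$ by \cite{svv}; for $C_{2k+1}$ the identity $\mathcal{R}_s(I)=k[x_ix_{i+1}t,\ ct^{k+1}]$ with $c=x_1\cdots x_{2k+1}$ does hold; and $(L(t))=I^t$ for odd cycles is \cite[Theorem 4.4]{janssen2017comparing}. The genuine gap is that the entire mathematical content of the odd-cycle case sits inside your ``peeling lemma,'' which you only announce as the step you ``expect'' to be delicate. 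Two claims are deferred there. The first (every generator of $D(t)$ is divisible by $c$) does go through cleanly: if $a_j=0$, take the two alternating minimum vertex covers $V_1,V_2$ with $V_1\cap V_2=\{x_j\}$ and $V_1\cup V_2=V(G)$, so that $2t\le w_{V_1}(x^{\underline{a}})+w_{V_2}(x^{\underline{a}})=\deg(x^{\underline{a}})$, contradicting $\deg(x^{\underline{a}})<2t$. The second claim, $x^{\underline{a}}/c\in I^{(t-(k+1))}$, is where your sketch is actually misleading rather than merely incomplete: for a minimal vertex cover $V$ one only gets $w_V(x^{\underline{a}}/c)=w_V(x^{\underline{a}})-|V|\ge t-|V|$, and for $2k+1\ge 9$ the odd cycle has minimal vertex covers of size strictly larger than $k+1$ (complements of non-maximum maximal independent sets, e.g. the complement of $\{x_1,x_4,x_7\}$ in $C_9$ has size $6>5$), for which $t-|V|<t-(k+1)$. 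Closing that case requires reusing the degree bound $\deg(x^{\underline{a}})<2t$ in a nontrivial way, and no such argument is indicated. Since the identity you are in effect reproving, $I^{(t)}=\sum_{j\ge 0}c^{\,j}I^{t-j(k+1)}$, is already established in \cite{janssen2017comparing}, citing it would be the honest way to fill the hole.

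A smaller but real error: your illustrative ``conceptual pitfall'' rests on a false example. Duplicating a vertex of $C_3$ does yield $C_4$, but $C_4$ is decomposable -- partition it into two disjoint edges to get $\tau=1+1=2=\tau(C_4)$ -- so it is not an instance of an indecomposable parallelization with a non-$0/1$ exponent vector. The surrounding methodological point is nevertheless correct: implosiveness asks only that \emph{some} generating set of $\mathcal{R}_s(I)$ consist of squarefree monomials, so showing the $0/1$ generators span the whole algebra is the right strategy, and you are right not to attempt to classify the indecomposable parallelizations directly.
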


\begin{defn}
Let $G_1$ and $G_2$ be graphs. Suppose that $G_1\cap G_2=K_r$ is the complete graph of order $r,$ where $G_1\neq K_r$ and $G_2\neq K_r.$ Then, $G_1\cup G_2$ is called the clique-sum of $G_1$ and $G_2.$
\end{defn}

\begin{thm}\cite[Theorem 2.5]{flores}\label{pimplosive}
The clique-sum of implosive graphs is again implosive.

\end{thm}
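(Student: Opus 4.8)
The plan is to reduce the statement to a combinatorial property of parallelizations and then exploit Theorem~\ref{1implosive}. By that theorem together with the definition of an implosive graph, a graph $H$ on $n=|V(H)|$ vertices is implosive if and only if every vector $v\in\mathbb{N}^{n}$ for which the parallelization $H^{v}$ is indecomposable already satisfies $v\in\{0,1\}^{n}$: indeed the minimal generators of the monomial algebra $\mathcal{R}_s(I(H))$ are uniquely determined, and by Theorem~\ref{1implosive} they are exactly the $x^{v}t^{\tau(H^{v})}$ with $H^{v}$ indecomposable. Hence, writing $G=G_1\cup G_2$ for the clique-sum, it suffices to prove the contrapositive: if some coordinate satisfies $v_i\ge 2$, then $G^{v}$ is decomposable.

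First I would fix notation reflecting the clique-sum. Put $C=V(G_1)\cap V(G_2)$, the vertex set of the shared complete graph $K_r$, and let $A=V(G_1)\setminus C$ and $B=V(G_2)\setminus C$, so that $V(G)=A\sqcup C\sqcup B$ and, since every edge of $G$ lies in $G_1$ or in $G_2$, there is no edge of $G$ between $A$ and $B$. A vector $v$ on $V(G)$ restricts to $v^{(1)}$ on $V(G_1)=A\sqcup C$ and to $v^{(2)}$ on $V(G_2)=C\sqcup B$. The structural heart of the argument is to identify $G^{v}$: because duplicating a vertex creates a new vertex with the same neighborhood but non-adjacent to the original, the copies of the clique $C$ assemble into a complete multipartite graph, and $G^{v}$ is obtained by gluing $G_1^{v^{(1)}}$ and $G_2^{v^{(2)}}$ along these copies of $C$, with the copies of $A$ and the copies of $B$ remaining mutually non-adjacent.

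With this description in hand I would argue according to where a coordinate $v_i\ge 2$ occurs. At least one of $v^{(1)},v^{(2)}$ then has a coordinate $\ge 2$; by symmetry assume $v^{(1)}$ does. By implosiveness of $G_1$, the parallelization $G_1^{v^{(1)}}$ is decomposable, so $V(G_1^{v^{(1)}})=U_1\sqcup U_2$ with $\tau(G_1^{v^{(1)}})=\tau(G_1^{v^{(1)}}[U_1])+\tau(G_1^{v^{(1)}}[U_2])$. I would lift this to a partition of $V(G^{v})$ by appending all copies of $B$ to whichever block contains the copies of $C$, and then show that the cover number remains additive across the enlarged partition. The cleanest instance is when the large coordinate lies in $C$ itself: a duplicated clique vertex yields two non-adjacent copies with identical neighborhoods throughout $G^{v}$, and splitting off one such copy as a singleton block produces the desired additive partition directly.

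The main obstacle is the bookkeeping of vertex covers across the gluing locus. Parallelization turns the shared clique $K_r$ into a complete multipartite graph rather than a clique, so the clique-sum structure is not literally inherited by $G^{v}$ and no ready-made additivity formula for $\tau$ applies. For every partition one always has $\tau(G^{v})\ge\sum_i\tau(G^{v}[V_i])$, since the restriction of any minimum cover covers each block; thus the entire difficulty is to arrange the reverse inequality. Concretely, I must choose minimum vertex covers of $G_1^{v^{(1)}}$ and of $G_2^{v^{(2)}}$ that agree on the shared copies of $C$, so that combining them incurs no extra covering cost along $C$. Establishing that such compatible covers exist — using that duplicated clique vertices are non-adjacent and share their neighborhoods — is the step I expect to be the most delicate.
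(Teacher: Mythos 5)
The paper offers no proof of this statement --- it is quoted verbatim from \cite{flores} --- so there is nothing in-text to compare against; I can only assess your argument on its own terms. Your framework (characterize implosiveness as ``every $v$ with $H^v$ indecomposable lies in $\{0,1\}^n$'' and prove the contrapositive for the clique-sum) is reasonable, but already the starting equivalence needs more than Theorem~\ref{1implosive} as stated: that theorem only exhibits a generating set of $\mathcal{R}_s(I)$, whereas you also need that $x^vt^{\tau(G^v)}$ is a \emph{minimal} generator whenever $G^v$ is indecomposable. This is true (and is in \cite{bernal}), but it is the direction you actually use when you invoke implosiveness of $G_1$ to conclude that $G_1^{v^{(1)}}$ is decomposable, so it must be justified, not folded into the definition.

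The genuine gap is the lifting step, and the instance you call cleanest already fails. Take $G_1=K_2$ on $\{a,c\}$ and $G_2=K_2$ on $\{c,b\}$ glued along $K_1=\{c\}$, so $G$ is the path on $a,c,b$; duplicate the clique vertex $c$. Then $G^v$ is the $4$-cycle $(a,c',b,c'')$ with $\tau(G^v)=2$, while $\tau(G^v[\{a,b,c''\}])=1$, so the partition $\{c'\}\sqcup\{a,b,c''\}$ gives $0+1\neq 2$ and is not additive; the same failure occurs for $r\geq 2$ (two triangles glued along an edge, one shared vertex duplicated). The reason is that the twin $c''$ may lie in \emph{every} minimum cover of $G^v\setminus\{c'\}$, in which case that cover does not absorb the edges at $c'$ for free. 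Here $C_4$ \emph{is} decomposable, but only via $\{a,c'\}\sqcup\{b,c''\}$, a partition that separates the two copies of $c$ --- which also exposes the problem with your general lifting: the additive partition $U_1\sqcup U_2$ of $G_1^{v^{(1)}}$ supplied by implosiveness of $G_1$ may split the copies of $C$ between blocks, so ``the block containing the copies of $C$'' need not exist, and the existence of minimum covers of $G_1^{v^{(1)}}$ and $G_2^{v^{(2)}}$ that agree on the copies of $C$ --- which you correctly identify as the crux --- is essentially the whole theorem and is not established. The result is true and your scaffolding is salvageable, but as written the combinatorial core is missing.
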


\section{Symbolic powers of clique sum of two odd cycles joined at a single vertex}
 The work of this section is mainly motived by  \cite[Example 3.5]{gu2018symbolic}. Let $G$ be the clique sum of two different odd length cycles joined at single vertex and $I=I(G)$ be the edge ideal of $G$. In this section we will describe the generators for $I^{(t)}$ by explicitly identifying $(L(t))$ and $(D(t)).$ In \cite[Theorem 4.4]{janssen2017comparing} Janssen et al have proved that if $G$ is an odd cycle and $I$ is the edge ideal of $G$ then $(L(t))=I^t$ for $t\geq 1$. In the following example we show that for the clique sum of two different odd length cycles joined at single vertex $(L(t))\not=I^t$.
\begin{example}
   Let $G$ be the clique-sum of two cycles $C_1=(x_1,x_2,x_3,x_4,x_5)$ and $C_2=(x_1,y_2,y_3,y_4,y_5)$ joined at a single vertex $x_1.$ Consider the monomial $m=x_4y_2y_3y_4y_5x_1$. Clearly $m\notin I^3.$ Also for any vertex cover $V$, $W_V(m)\geq 3$, therefore $m\in(L(3)). $ So $I^3\neq (L(3)).$
\end{example}
 In this case we will try to understand the generators of $(L(t))$. For this we recall the definition of the optimal form introduced in  \cite{janssen2017comparing}.
\begin{defn}
Let $ m \in k[x_1,\ldots,x_{n}]$ be a monomial and $G$ be a finite simple connected graph on the set of vertices $\{x_1,\dots,x_n\}$. Let $\{e_1,e_2,\dots,e_r\}$ denote the set of edges in the graph. We may write $ m=x_1^{a_1}x_2^{a_2} \cdots x_{n}^{a_{n}}e_1^{b_1}e_2^{b_2}\cdots e_{r}^{b_{r}} $,
where $b(m) := \sum{b_j}$ is as large as possible, when $m$ is written in this way, we will call this an optimal form of $m$ or we will say that $m$ is expressed in optimal form, or simply $m$ is in optimal form. In addition, each $ x_i^{a_i} $ with $a_i >0 $ in this form will be called an ancillary factor of the optimal form, or just ancillary for short and $x_i$'s are called ancillary vertices.

\end{defn}
Note that optimal form of a monomial need not be unique but $b(m)$ is unique.

\begin{lem}\label{2optimal}
Let $G$ be a finite simple connected graph on the set of vertices $\{x_1,x_2,\dots,x_n\}$. $I=I(G)$ be the edge ideal. Let $m=x_i e_i^{b_i}\cdots e_{j-1}^{b_{j-1}}\in I(G)$ be a monomial, where $b_k\geq 0$ for $i\leq k \leq j-1$ and $e_i=x_ix_{i+1}.$ Then $mx_j$ will not be an optimal form if and only if the number of vertices within $x_i$ and $x_j$ is even and $b_{i+2h+1}\geq 1$, for $0\leq h\leq \frac{j-i-2}{2}$ with $h\in \mathbb{Z}.$
\end{lem}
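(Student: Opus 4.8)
The plan is to recast the optimality question as a purely combinatorial one about decomposing the vertex-multiset of $mx_j$ into edges, and then to solve the resulting linear recurrence along the path $x_i,x_{i+1},\dots,x_j$. First I would record the exponent vector of $mx_j$. Writing $B=\sum_{k=i}^{j-1}b_k$, from $m=x_i\prod_{k=i}^{j-1}e_k^{b_k}$ and $e_k=x_kx_{k+1}$ one reads off that the multiplicity of $x_i$ in $mx_j$ is $d_i=1+b_i$, that of $x_j$ is $d_j=1+b_{j-1}$, and for $i<\ell<j$ the multiplicity of $x_\ell$ is $d_\ell=b_{\ell-1}+b_\ell$. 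Since $\deg(mx_j)=2B+2$ and every edge has degree $2$, any factorization of $mx_j$ into edges uses at most $B+1$ of them, with equality exactly when no ancillary vertex is left over (the factorization exhausts the whole vertex-multiset). The displayed factorization $mx_j=x_ix_j\prod_k e_k^{b_k}$ uses $B$ edges and the two ancillaries $x_i,x_j$, so it fails to be optimal if and only if $b(mx_j)=B+1$, i.e.\ if and only if $mx_j$ is a product of $B+1$ edges with no ancillary. Because $mx_j$ is supported on $\{x_i,\dots,x_j\}$, which spans the chordless path $e_i,\dots,e_{j-1}$ in $G$ (as in the intended applications), the only usable edges are $e_i,\dots,e_{j-1}$, and the question becomes whether there exist integers $c_k\ge 0$ with $\prod_k e_k^{c_k}=mx_j$.

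Next I would solve this system. As $x_i$ meets only $e_i$ and $x_j$ meets only $e_{j-1}$, we are forced to take $c_i=d_i$ and $c_{j-1}=d_j$, while for interior vertices $c_{\ell-1}+c_\ell=d_\ell$. Propagating from the left, $c_i=1+b_i$ and the alternating recurrence $c_\ell=d_\ell-c_{\ell-1}$ yields the closed form $c_{i+2s}=b_{i+2s}+1$ and $c_{i+2s+1}=b_{i+2s+1}-1$. On a path the multiplicities are thereby uniquely determined, so a valid decomposition exists precisely when this unique candidate is admissible, which means nonnegativity of all $c_k$ together with consistency at the right boundary $c_{j-1}=d_j$. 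The even-offset entries $c_{i+2s}=b_{i+2s}+1$ are automatically positive, whereas the odd-offset entries are $\ge 0$ exactly when $b_{i+2h+1}\ge 1$; tracking the admissible odd offsets $i+2h+1\le j-2$ gives the range $0\le h\le\lfloor (j-i-2)/2\rfloor$, which for $j-i$ odd coincides with the stated bound $\tfrac{j-i-2}{2}$ over $h\in\mathbb Z$. The boundary value produced by the recurrence equals $b_{j-1}+1=d_j$ exactly when the offset $j-1-i$ is even, i.e.\ when $j-i$ is odd, i.e.\ when the number of vertices from $x_i$ to $x_j$ is even; if $j-i$ is even the recurrence returns $c_{j-1}=b_{j-1}-1\ne d_j$, the system is inconsistent, no $(B+1)$-edge factorization exists, and $mx_j$ is automatically optimal. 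Combining the nonnegativity conditions with the parity condition yields both implications of the lemma.

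I expect the two delicate points to be, first, the parity bookkeeping at the right endpoint: one must show cleanly that consistency of the forced recurrence is equivalent to the even-vertex-count condition, and that when consistency fails the value of $b(mx_j)$ drops all the way to $B$ (so that $mx_j$ is optimal) rather than landing at some intermediate number — this follows from the degree count $\deg(mx_j)=2B+2$, which caps the edge total at $B+1$ and ties that cap to an exhaustive decomposition. Second, one must justify that no edge of $G$ outside $e_i,\dots,e_{j-1}$ can help, which rests on $mx_j$ being supported on the chordless segment $x_i,\dots,x_j$; this is precisely where the local structure of $G$ (a cycle, in our applications) is used, and I would make this hypothesis explicit. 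The remaining steps — solving the tridiagonal recurrence and the degree comparison — are routine.
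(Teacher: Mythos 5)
Your proof is correct, and it formalizes the lemma by a route that differs in execution from the paper's. The paper's forward direction is an informal ``pairing chase'': since $b(mx_j)>b(m)$ forces every vertex to be absorbed into an edge, $x_i$ must pair with a copy of $x_{i+1}$ supplied by $e_{i+1}$, which strands a copy of $x_{i+2}$ that forces $e_{i+3}$, and so on, until $x_j$ pairs with $x_{j-1}$ and the parity of $|[x_i,x_j]|$ falls out; the converse is then outsourced to \cite[Lemma 3.4]{janssen2017comparing}. You instead cap $b(mx_j)$ at $B+1$ by the degree count $\deg(mx_j)=2B+2$, observe that non-optimality of the displayed factorization is equivalent to an exhaustive $(B+1)$-edge decomposition, and solve the resulting tridiagonal system $c_i=d_i$, $c_{\ell-1}+c_\ell=d_\ell$, $c_{j-1}=d_j$ in closed form, so that the alternating-edge condition appears as nonnegativity of the odd-offset multiplicities $c_{i+2h+1}=b_{i+2h+1}-1$ and the parity condition appears as consistency at the right boundary. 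This buys you a self-contained proof of both implications at once (the uniqueness of the candidate $(c_k)$ on a path makes the ``if and only if'' immediate), and it makes transparent why failure of either condition drops $b(mx_j)$ back to $B$. Your flagged caveat is the right one and applies equally to the paper's argument: the reduction to the edges $e_i,\dots,e_{j-1}$ needs $\{x_i,\dots,x_j\}$ to induce a chordless path in $G$, which holds in the paper's applications (segments of cycles in a clique-sum at a single vertex) but is not literally guaranteed by the stated hypothesis ``finite simple connected graph''; making it explicit, as you propose, would strengthen the statement.
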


\begin{proof}
Since $m$ contains only one ancillary, so $m$ is in optimal form. Here $b(m)=b_i+b_{i+1}+\dots+b_{j-1}.$ Assume that $mx_j$ is not in optimal form, then $b(mx_j)>b(m)$ so there will be no ancillary in $mx_j,$ which implies $x_i$ will no longer be an ancillary, so $x_i$ has to pair up with $x_{i+1}$ to form an edge. The vertex $x_{i+1}$ can come from two edges $e_i$ or $e_{i+1}.$  If $x_{i+1}$ comes from $e_i$, then $x_i$ will be again an ancillary, so that vertex should come from the edge $e_{i+1}$, which implies that the edge $e_{i+1}$ has to be present in $m$. As $x_i$ form an edge with $x_{i+1}$, then   $x_{i+2}$ has to be pair up with some vertex. By similar argument $e_{i+3}$ has to be present in $m$. By repeating this process we get $b_{i+2h+1} \geq 1$ for $0\leq h\leq \frac{j-i-2}{2}.$
As there are no ancillary in $mx_j,$ $x_j$ has to be pair up with $x_{j-1}.$ Then there is only one option to get the edge $x_{j-1}x_j,$ is that $x_{j-1}$ has to come from an edge  $e_{i+2h+1}=x_{i+2h+1}x_{i+2h+2}$ for some $h$, which implies $x_{i+2h+2}=x_{j-1}.$ Thus the number of vertices in $[x_i,x_j]$ is $i+2h+3-i+1=2h+4$ which is even and $b_{i+2h+1}\geq 1$ for $0\leq h\leq \frac{j-i-2}{2}$.

 Conversely if $mx_j=x_i e_i^{b_i}\cdots e_{j-1}^{b_{j-1}}x_j$ with $b_{i+2h+1}\geq 1$ for $0\leq h\leq \frac{j-i-2}{2},$ then by \cite[Lemma 3.4]{janssen2017comparing} $mx_j$ is not in optimal form.
\end{proof}

\begin{defn}
  Let $G$ be the clique sum of two cycles joined at a single vertex. Let $x_i,x_j \in V(G)$ and the vertices between $x_i$ and $x_j$ is even and let $e_i$ denote the edge $x_ix_{i+1}.$ Then the edges $e_{i+1},e_{i+3},\dots,e_{j-2}$ are called alternating edges.
\end{defn}

 In the next lemma we state the key idea of \cite[Theorem 4.4]{janssen2017comparing}, as we will be using this fact in describing the generators of $(L(t)).$

 \begin{lem}\label{cycle}
Let $G$  be an odd cycle and $I=I(G)$ be the edge ideal of $G$. Let $m$ be any monomial then there exists a minimal vertex cover $V$ such that $W_V(m)=b(m)$.
 \end{lem}

\begin{lem}\label{bm}
Let $G$ be the clique sum of two odd cycles  $C_1=(x_1,\ldots,x_{2n+1})$ and $C_2=(x_1,y_2,\ldots,y_{2m+1})$ joined at a vertex $x_1$ with $n\leq  m$ and $I=I(G)$ be the edge ideal of $G$.  Let $\underline{m}$ be a monomial such that  $\underline{m} \notin (c_1,c_2),$ where $c_1=x_1\cdots x_{2n+1}$ and $c_2=x_1y_2\cdots y_{2m+1}$. Then there exists a minimal vertex cover $V$ such that $W_V(\underline m)=b(\underline m)$.
\end{lem}


\begin{center}

\includegraphics[scale=0.55]{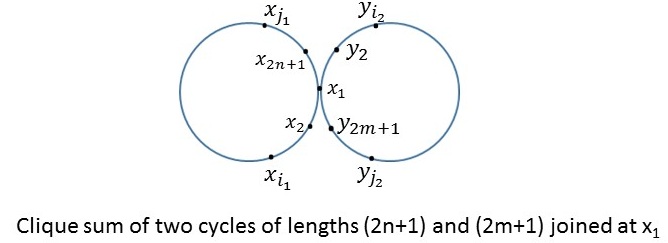}

\end{center}

\begin{proof}
  Let the monomial $\underline{m}$ is of the form
 $\underline{m}=x_{l_1}^{a_{l_1}}x_{l_2}^{a_{l_2}}\cdots x_{l_k}^{a_{l_k}}y_{r_1}^{a_{r_1}}y_{r_2}^{a_{r_2}}\cdots y_{r_p}^{a_{r_p}}e_1^{b_1}\cdots e_{2n+1}^{b_{2n+1}}f_1^{d_1}\cdots\\ f_{2m+1}^{d_{2m+1}},$ where $e_i=x_ix_{i+1}$ for $1\leq i\leq 2n$ and $e_{2n+1}=x_{2n+1}x_1$ and $f_i=y_iy_{i+1}$ for $2\leq i\leq 2m$ and $f_1=x_1y_2$, $f_{2m+1}=y_{2m+1}x_1.$

If $x_1$ is an ancillary vertex of $\underline{m}$ and $\underline{m}\notin(c_1,c_2)$ then by Lemma \ref{cycle} we can choose minimal vertex cover $V_1$ and $V_2$ from the cycles $C_1$ and $C_2$ such that $V=V_1 \cup V_2$ will not contain $x_1$ and $W_V(\underline{m})=b(\underline{m}).$\\ In the rest of the proof we assume that $x_1$ is not an ancillary of $\underline{m}$. Let $[x_i,x_j]$ denote the set of vertices between $x_i$ and $x_j$ including $x_i$ and $x_j$ along clock-wise path and $|[x_i,x_j]|$ denote the number of vertices in that set.  Depending on neighbourhood ancillary vertices of $x_1$ we describe the process to choose the minimal vertex cover $V$ such that $W_V(\underline{m})=b(\underline{m}).$ We have divided the proof into four cases depending on the neighbourhood ancillaries of $x_1$.

 \textbf{Case 1:}  Suppose there are at least two ancillaries from each cycle. Let $x_{i_1}$ and $x_{j_1}$ are two consecutive ancillaries from the cycle $C_1$ and $y_{i_2}$, $y_{j_2}$ are two consecutive ancillaries from the cycle $C_2$ such that $x_1\in [x_{j_1},x_{i_1}]$ and $x_1\in [y_{j_2},y_{i_2}]$, where $i_1<j_1$ and $i_2<j_2$ .
 Let $H_q$ be  the induced subgraph of $G$ on $V_{H_q}=\{x_{j_1},x_{j_1+1},\ldots,x_1,\ldots,x_{i_1},y_{j_2},\ldots,y_{2m+1},y_2,\ldots,y_{i_2} \}.$
  And let $\underline{m}_q=x_{j_1}^{a_{j_1}}e_{j_1}^{b_{j_1}}\cdots e_1^{b_1}\cdots e_{i_1-1}^{b_{i_1-1}}x_{i_1}^{a_{i_1}}y_{j_2}^{c_{j_2}}f_{j_2}^{d_{j_2}}\cdots f_{2m+1}^{d_{2m+1}}f_1^{d_1}\cdots f_{i_2-1}^{d_{i_2-1}}y_{i_2}^{c_{i_2}}.$ 

 Let $\underline{m}_{1_{C_1}}= x_{j_1}^{a_{j_1}}e_{j_1}^{b_{j_1}}\cdots e_{2n+1}^{b_{2n+1}}$, $\underline{m}_{2_{C_1}}= e_1^{b_1}\cdots e_{i_1-1}^{b_{i_1-1}}x_{i_1}^{a_{i_1}}$,
 $\underline{m}_{1_{C_2}}= f_1^{d_1}\cdots f_{i_2-1}^{d_{i_2-1}}y_{i_2}^{c_{i_2}}$ and
 $\underline{m}_{2_{C_2}}= y_{j_2}^{c_{j_2}}f_{j_2}^{d_{j_2}}\cdots f_{2m+1}^{d_{2m+1}}.$

 If $\underline{m}_{1_{C_1}}x_1$, $\underline{m}_{2_{C_1}}x_1$, $\underline{m}_{1_{C_2}}x_1$ and $\underline{m}_{2_{C_2}}x_1$ are in optimal form then $mx_1$ is in optimal form, which means $x_1$ is an ancillary vertex of $mx_1.$ So we can find a minimal vertex cover $V$ such that $W_V(\underline{m}x_1)=b(\underline{m}x_1),$ which implies $W_V(\underline{m})=b(\underline{m}).$  Let us assume that $\underline{m}_{1_{C_1}}x_1$ is not in optimal form.
Then by Lemma \ref{2optimal}, $|[x_{j_1},x_1]|$ is even and $b_{j_1+2h+1}\geq 1$ for $0\leq h\leq \frac{2n-j_1}{2}.$
Now choose the minimal vertex cover for the part $\underline{m}_{1_{C_1}}$ as $C_{1_{V_1}}=\{x_{j_1+1},x_{j_1+3},\ldots,x_1  \}.$
\newline (a)   If $\underline{m}_{1_{C_2}}x_1$ is in optimal form then one of the following condition will hold.
\begin{enumerate}
  \item[(i)] $|[x_1,y_{i_2}]|$ is odd. Then $|[x_{j_1},y_{i_2}]|$ is even and one of the alternating edge between $x_{j_1}$ and $y_{i_2}$ will be missing (as $\underline{m}_{1_{C_1}}  \underline{m}_{1_{C_2}}$ is in optimal form). Since all the alternating edges in $\underline{m}_{1_{C_1}}$ are present so the alternating edge will be missing from the part $\underline{m}_{1_{C_2}}.$
   Therefore $d_{2h+1}=0$  for some $h$, where $0\leq h\leq \frac{i_2-2}{2}.$ Then choose the minimal vertex cover for the part $\underline{m}_{1_{C_2}}$ as $C_{2_{V_1}}= \{  x_1,y_3,\ldots,y_{2h+1},y_{2h+2},y_{2h+4},\ldots,\\y_{i_2-1}\}.$
  \item[(ii)] $|[x_1,y_{i_2}]|$ is even and one of the alternating edges will be missing. Then choose the minimal vertex cover as  $C_{2_{V_1}}= \{ x_1,y_3,\ldots,y_{i_2-1}\}$ (as $i_2-1$ is odd).
\end{enumerate}

(b) If $\underline{m}_{1_{C_2}}x_1$ is not optimal form then $|[x_1,y_{i_2}]|$ is even, so choose the minimal vertex cover as $ C_{2_{V_1}}=\{x_1,y_3,\ldots,y_{i_2-1} \}.$

Similarly we do the same thing for $\underline{m}_{2_{C_2}}x_1.$ If $\underline{m}_{2_{C_2}}x_1$ is in optimal form and if $|[y_{j_2},x_1]|$ is odd then we can deduce from the previous discussion that $d_{j_2+2h+1}=0$ for some $h$, where $0\leq h \leq \frac{2m-j_2}{2}.$ Then take the minimal vertex cover as $C_{2_{V_2}}=\{y_{j_2+1},y_{j_2+3},\ldots,y_{j_2+2h+1},\\y_{j_2+2h+2},y_{j_2+2h+4}, \ldots,x_1\}.$
In other cases $|[y_{j_2},x_1]|$ will be even then choose the minimal vertex cover as
$C_{2_{V_2}}= \{y_{j_2+1},y_{j_2+3},\ldots,x_1\}$.

Again we will find the minimal vertex cover for the part $\underline{m}_{2_{C_1}}.$ If $|[x_1,x_{i_1}]|$ is odd in clock-wise direction then $b_{2h+1}=0$ for some $h$, where $0\leq 2h\leq i_1-2.$ Then take the minimal vertex cover as $C_{1_{V_2}}=\{ x_1,x_3,\ldots,x_{2h+1},x_{2h+2},x_{2h+4},\ldots,x_{i_1-1}\}$ (as $i_1-1$ is even).
In other cases $|[x_1,x_{i_1}]|$ is even, then take the minimal vertex cover as $C_{1_{V_2}}=\{x_1,x_3,\ldots,x_{i_1-1}\}$ (as $i_1-1$ is odd).
%


Our aim is here to find a minimal vertex cover $V$ such that $W_V(\underline{m}_q)=b(\underline{m}_q).$ Depending on whether the four monomial part $\underline{m}_{1_{C_1}}x_1,\underline{m}_{2_{C_1}}x_1,\underline{m}_{1_{C_2}}x_1,\underline{m}_{2_{C_2}}x_1$ are in optimal form or not we can choose the minimal vertex cover. Here we will show for one case and the other cases will be similar.
Assume that $\underline{m}_{1_{C_1}}x_1,$  $\underline{m}_{2_{C_2}}x_1$, $\underline{m}_{2_{C_1}}x_1$ are not in optimal form and $\underline{m}_{1_{C_2}}x_1$ is in optimal form. Then depending on $|[x_1,y_{i_2}]|$ is even or odd there are two possibilities of minimal vertex cover. If $|[x_1,y_{i_2}]|$ is odd then choose the minimal vertex cover for $\underline{m}_q$ as  $V=\{x_{j_1+1},x_{j_1+3},\ldots,x_1,y_3,\ldots,y_{2h+1},y_{2h+2},y_{2h+4},y_{i_2-1},y_{j_2+1}
,y_{j_2+3},\ldots,\\y_{2m}, x_1,x_3,\ldots,x_{i_1-1}\},$ where $d_{2h+1}=0.$
Now $b(\underline{m}_q)=b_{j_1}+\cdots+b_{2n+1}+b_1+\cdots+b_{i_1-1}+d_{j_2}+\cdots+d_{2m+1}+d_1+\cdots+d_{i_2-1}$ and $W_V(\underline{m}_q)=b_{j_1}+\cdots+b_{2n+1}+b_1+d_1+d_{2m+1}+d_2+\cdots+d_{2h}+2d_{2h+1}+d_{2h+2}+\ldots+d_{i_2-1}
+d_{j_2}+\cdots+d_{2m}+b_2+b_3+\cdots+b_{i_1-1}.$ As $d_{2h+1}=0$, $W_V(\underline{m}_q)=b(\underline{m}_q).$

If $|[x_1,y_{i_2}]|$ is even then choose the minimal vertex cover for $\underline{m}_q$ as $V=\{x_{j_1+1},x_{j_1+3},\ldots,x_1,\\y_3,\ldots,y_{i_2-1},y_{j_2+1},y_{j_2+3},\ldots,
y_{2m},x_1,x_3,\ldots,x_{i_1-1}\} $ and note that $W_V(\underline{m}_q)=b(\underline{m}_q).$  Similarly we can write the minimal vertex cover for the remaining cases such that $W_V(\underline{m}_q)=b(\underline{m}_q).$ Now for the vertices of $G$ that are not in $V_{H_q}$, we can choose a minimal vertex cover $V^{\prime}$ by Lemma \ref{cycle}  such that $W_{V^{\prime}\cup V}(\underline{m})=b(\underline{m}).$
\\

\textbf{Case 2:}  Suppose that each cycle contains single ancillary. Let the ancillaries are $x_{j_1}$ and $y_{j_2}$ in $C_1$ and $C_2$ respectively and  $\underline{m}_{1_{C_1}}= x_{j_1}^{a_{j_1}}e_{j_1}^{b_{j_1}}\cdots e_{2n+1}^{b_{2n+1}}$, $\underline{m}_{2_{C_1}}= e_1^{b_1}\cdots e_{i_1-1}^{b_{i_1-1}}x_{j_1}^{a_{i_1}}$,
 $\underline{m}_{1_{C_2}}= f_1^{d_1}\cdots f_{i_2-1}^{d_{i_2-1}}y_{j_2}^{c_{i_2}}$ and
 $\underline{m}_{2_{C_2}}= y_{j_2}^{c_{j_2}}f_{j_2}^{d_{j_2}}\cdots f_{2m+1}^{d_{2m+1}}.$
 Let us assume that
$\underline{m}_{1_{C_1}}x_1$ is not in optimal form. Then $|[x_{j_1},x_1]|$ is even and all the alternating edges will be present. Then choose the minimal vertex cover for $\underline{m}_{1_{C_1}}$as $C_{1_{V_1}}=\{ x_{j_1+1},x_{j_1+3},\ldots,x_1\}.$ As $\underline{m}\notin (c_1,c_2)$, therefore at least one vertex will be missing from each of the cycle in $\underline{m}$. The missing vertex in $C_1$ will be from the part $\underline{m}_{2_{C_1}}$ and will be of the form $x_{2k}$ or $x_{2k+1}.$ If $x_{2k}$ is missing then take the minimal vertex cover for $\underline{m}_{2_{C_1}}$ as (here $b_{2k-1}=0$ and $b_{2k}=0$) $C_{1_{V_2}}= \{ x_1,x_3,\ldots,x_{2k-1},x_{2k},x_{2k+2},\ldots,x_{j_1-1} \}$ (as $j_1-1$ is even).
If $x_{2k+1}$ is missing then take the minimal vertex cover as $C_{1_{V_2}}=\{
x_1,x_3,\ldots,x_{2k+1},x_{2k+2},x_{2k+4},\ldots,x_{j_1-1} \}.$ Now take
$V_1=C_{1_{V_1}}\cup C_{1_{V_2}}.$ Then $V_1$ will be either $\{x_{j_1+1},x_{j_1+3},\ldots,x_1,x_3,\ldots,x_{2k-1},x_{2k},x_{2k+2},\\ \ldots,x_{j_1-1}\}$
 or $\{x_{j_1+1},x_{j_1+3},\ldots,x_1,x_3, \ldots,x_{2k+1},x_{2k+2},x_{2k+4},\ldots,x_{j_1-1}\}.$ Now if $\underline{m}_{1_{C_2}}x_1$  is not in optimal form then  similarly we can construct a minimal vertex cover $V_2$ containing $x_1$ for $C_2$.
Next assume that $\underline{m}_{1_{C_2}}x_1$ is in optimal form. Then there are two possibilities:\\
\romannumeral 1) $|[x_1,y_{j_2}]|$ is odd. In this case we can choose the minimal vertex cover as $ C_{2_{V_1}}=\{x_1,y_3,\ldots,y_{2h+1},y_{2h+2},y_{2h+4},\ldots,y_{j_2-1}\}$ by the method  described in the Case 1. Since $|[y_{j_2},x_1]|$ is even so we can choose the minimal vertex cover for $\underline{m}_{2_{C_2}}$ as $C_{2_{V_2}}=\{y_{j_2+1},y_{j_2+3},\\ \dots ,y_{2m},x_1\}$. \\

\romannumeral 2) $|[x_1,y_{j_2}]|$ is even and one of the alternating edge is missing. In this case we can choose the minimal vertex cover as  $ C_{2_{V_1}}=\{x_1,y_3,\ldots,y_{j_2-1}\}.$  Since $|[y_{j_2},x_1]|$ is odd so we can choose the minimal vertex cover for $\underline{m}_{2_{C_2}}$ as $C_{2_{V_2}}=\{y_{j_2+1},y_{j_2+3},\dots ,y_{j_2+2h+1},y_{j_2+2h+2},\\y_{j_2+2h+4},\dots ,x_1\}$ as described in Case 1.
In both the cases consider $V_2=C_{2_{V_1}}\cup C_{2_{V_2}}$ and $V=V_1\cup V_2.$
Then by similar type of calculation observe that $W_V(\underline{m})=b(\underline{m})$.
\\

%
%
%
%
%
%
%
%

  \textbf{Case 3:} Suppose one cycle contains single ancillary and another cycle contains more than one ancillary, then by Case 1 and Case 2 we can choose the minimal vertex cover $V$ such that $W_V(\underline{m})=b(\underline{m}).$\\

 \textbf{Case 4:} Suppose that all the ancillaries are from one cycle and assume that this cycle is $C_1$. It may contain one ancillary or more than one ancillary. Let us do here for the case more than one ancillary, other cases will be similar. Let us assume that  $x_{j_1}$ and $x_{i_1}$ are two consecutive ancillaries from the cycle $C_1$ such that $x_1\in[x_{j_1},x_{i_1}],$ where $i_1<j_1$ and $V_{H_q}=\{x_{j_1},x_{j_1+1},\dots,x_1,\dots,x_{i_1},y_2,y_3,\dots,y_{2m+1}\}.$
  By Case 1 choose the minimal vertex cover for the monomial parts $\underline{m}_{1_{C_1}}$ and $\underline{m}_{2_{C_1}}.$ Let $V_1$ be the minimal vertex cover for these two parts. Now we will choose the minimal vertex cover for the cycle $C_2.$  Since $\underline{m}\notin (c_1,c_2)$ so at least one vertex  will be missing from the cycle $C_2.$

(a) If the vertex $x_1$ is missing from the monomial then we can keep $x_1$ in the minimal vertex cover and take $V_2=\{x_1,y_2,y_4,\dots,y_{2n}\}$ and $V=V_1\cup V_2.$\\
(b) If $x_1$ is not missing from the monomial then we need to find $V_2.$ Here some $y_r$ will be missing from the monomial. Now $V_1$ may or may not contain $x_1.$ First we consider the case where $V_1$ contains $x_1.$
If the missing vertex is of the form $y_{2k}$ then take the minimal vertex cover as $V_2=\{x_1,y_3,\ldots,y_{2k-1},y_{2k},y_{2k+2},\ldots,y_{2m}\},$ or if missing vertex is of the form $y_{2k+1}$ then take the minimal vertex cover as $V_2=\{x_1,y_3,\ldots,y_{2k+1},y_{2k+2},y_{2k+4},\ldots,\\y_{2m}\}.$  Now if $x_1$ is not missing as well as $x_1$ is not in $V_1$ then take $V_2$ as $\{y_2,y_4,\dots,y_{2k},y_{2k+1},\\\dots,y_{2m}\}.$ Then consider  $V=V_1\cup V_2$ and by similar type of calculation observe that $W_V(\underline{m}_q)=b(\underline{m}_q).$ Now for the vertices of $G$ that are not in $V_{H_q}$, we can choose a minimal vertex cover $V^{\prime}$ by Lemma \ref{cycle}  such that $W_{V^{\prime}\cup V}(\underline{m})=b(\underline{m}).$

\end{proof}

\begin{proposition}\label{lt}
Let $G$ be the clique sum of two odd cycles  $C_1=(x_1,\ldots,x_{2n+1})$ and $C_2=(x_1,y_2,\ldots,y_{2m+1})$ joined at a vertex $x_1$ with $n\leq m$ and $I=I(G)$ be the edge ideal of $G$. Let $c_1=x_1\cdots x_{2n+1}$ and  $c_2=x_1y_2\cdots y_{2m+1}.$ Then $(L(t))\subseteq I^{t} + (c_1,c_2).$


\end{proposition}

\begin{proof} We prove the proposition by contrapositive. Let $\underline{m}$ be a monomial such that $\underline{m}$ $\notin I^{t}$ and $\underline{m} \notin (c_1,c_2).$ By Lemma \ref{bm}, there exist a minimal vertex cover $V$ such that $W_V({\underline{m}})=b({\underline{m}})$. As $\underline{m}\notin I^{t}$, therefore $b(\underline{m})< t.$ As $b(\underline{m})<t$, so $\underline{m}\notin L(t)$.  Since $L(t)$ is a generating set of $(L(t)),$ this is sufficient to claim that $\underline{m}\notin (L(t))$ because neither $\underline{m}$, nor any of its divisors whose vertex weights can only be less than that of $\underline{m}$, will be in the generating set.
\end{proof}


In order to describe $I^{(t)}$ we need the description of $(D(t)).$ Next lemma describes the elements of $(D(t)).$
\begin{lem}\label{dt}
Let $G$ be the clique sum of two odd cycles  $C_1=(x_1,\ldots,x_{2n+1})$ and $C_2=(x_1,y_2,\ldots,y_{2m+1})$ joined at a vertex $x_1$ with $n\leq m$.  Let $I=I(G)$ be the edge ideal of $G$. Then for $t\geq 2$ {\small $$(D(t))= \langle\{I^i(c_1)^s(c_2)^b \mid {i+(n+1)s+(m+1)b \geq t \mbox{ and } 2i+(2n+1)s+(2m+1)b \leq 2t-1 }  \}\rangle $$}
  where $c_1=x_1x_2\cdots x_{2n+1}$ and $c_2=x_1y_2\cdots y_{2m+1}$.

\end{lem}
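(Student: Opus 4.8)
The plan is to compute the symbolic Rees algebra $\mathcal{R}_s(I)$ explicitly, read off $I^{(t)}$ as its graded piece in $t$-degree $t$, and then isolate the low-degree generators to recover $D(t)$. Since $G$ is the clique-sum of the two odd cycles $C_1$ and $C_2$ along the single vertex $x_1$, Theorem \ref{pcycle} gives that each $C_i$ is implosive and Theorem \ref{pimplosive} gives that $G$ is implosive. Hence, by Theorem \ref{1implosive}, $\mathcal{R}_s(I)$ is generated by the monomials $x^v t^{\tau(G^v)}$, where (by implosiveness) $v$ is a $0/1$-vector and the parallelization $G^v$ is just the induced subgraph $G[S]$ on $S=\mathrm{supp}(v)$, subject to $G[S]$ being indecomposable.

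The combinatorial heart of the argument is to list these indecomposable induced subgraphs. I claim the only ones carrying at least one edge are the single edges, $C_1$, and $C_2$. A disconnected induced subgraph decomposes along its components, so I may assume connectedness; and since $x_1$ is the unique cut vertex of $G$, a connected induced subgraph lies in $C_1$, lies in $C_2$, or straddles $x_1$. Inside a single cycle a proper connected induced subgraph is a path, which for $\geq 2$ edges decomposes by peeling off a leaf edge, so only single edges and the full odd cycle survive. A subgraph straddling $x_1$ that omits a full cycle is a tree, hence decomposable; one that contains a full cycle $C_i$ together with extra vertices decomposes by splitting off $V(C_i)$, the additivity $\tau(G[S]) = \tau(C_i) + \tau(G[S\setminus V(C_i)])$ following from a vertex-cover count that uses the fact that a minimum cover of an odd cycle may be chosen to contain any prescribed vertex (here $x_1$). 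In particular $G$ itself decomposes via $V(G) = V(C_1) \cup (V(C_2)\setminus\{x_1\})$. This yields
\[
\mathcal{R}_s(I) = \mathcal{R}(I)[\,c_1 t^{n+1},\, c_2 t^{m+1}\,],
\]
and reading off the piece in $t$-degree $t$ (note $c_1\in I^{(n+1)}$ and $c_2\in I^{(m+1)}$, since any minimal vertex cover of $G$ meets $V(C_1)$ in at least $\tau(C_1)=n+1$ vertices and $V(C_2)$ in at least $m+1$ vertices, whence $w_{V'}(c_1)\geq n+1$ and $w_{V'}(c_2)\geq m+1$) gives
\[
I^{(t)} = \sum_{\,i+(n+1)s+(m+1)b \,=\, t} I^i\, c_1^s\, c_2^b .
\]

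Finally I would extract $D(t)$. For the inclusion $\supseteq$: whenever a triple $(i,s,b)$ satisfies both displayed inequalities of the statement, every generator $w\,c_1^s c_2^b$ (with $w$ a minimal generator of $I^i$, so $\deg w = 2i$) lies in $I^{(i+(n+1)s+(m+1)b)}\subseteq I^{(t)}$ and has degree $2i+(2n+1)s+(2m+1)b \leq 2t-1 < 2t$, hence lies in $D(t)$; thus $I^i c_1^s c_2^b \subseteq (D(t))$. For $\subseteq$: a monomial $\underline{m}\in D(t)$ is divisible by some minimal generator $h$ of $I^{(t)}$, and $\deg h \leq \deg \underline{m} < 2t$. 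By the formula above every minimal generator of $I^{(t)}$ has the form $h=w\,c_1^s c_2^b$ with $i+(n+1)s+(m+1)b = t\,(\geq t)$, and $\deg h = 2i+(2n+1)s+(2m+1)b < 2t$ forces $\leq 2t-1$; so $(i,s,b)$ meets both conditions and $\underline{m}\in I^i c_1^s c_2^b$, giving the claimed equality.

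The main obstacle is the classification of indecomposable induced subgraphs in the second paragraph: the disconnected and forest cases are immediate, but the subgraphs consisting of a full odd cycle with a path dangling from the cut vertex $x_1$ require the careful vertex-cover count establishing that the splitting is $\tau$-additive. Everything after that is routine bookkeeping with degrees and symbolic-power containments; the hypothesis $n\leq m$ serves only to fix which cycle is labelled $C_1$.
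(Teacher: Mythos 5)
Your argument is correct, but it reaches the lemma by a genuinely different route from the paper. The paper works directly with vertex-cover weights: it takes a monomial of $D(t)$, writes it as a product of edges, copies of $c_1$, $c_2$ and an ancillary part, chooses a minimal vertex cover avoiding the ancillaries so that the weight of the core equals $i+(n+1)s+(m+1)b$, and reads off the two inequalities from that weight together with the degree bound $\deg < 2t$; the existence of such a factorization with weight at least $t$ is essentially presupposed there. You instead derive everything from the structure of the symbolic Rees algebra: implosiveness (Theorems \ref{pcycle} and \ref{pimplosive}) plus your hands-on classification of indecomposable induced subgraphs (only single edges, $C_1$ and $C_2$ carry an edge) give $\mathcal{R}_s(I)=\mathcal{R}(I)[c_1t^{n+1},c_2t^{m+1}]$, hence $I^{(t)}=\sum_{i+(n+1)s+(m+1)b=t}I^i c_1^s c_2^b$, after which isolating $(D(t))$ is pure degree bookkeeping in both directions. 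The paper only deploys this machinery later, in Theorem \ref{t2}, where the classification is outsourced to Harary--Plummer; your $\tau$-additivity argument for splitting off $V(C_i)$ (a minimum cover of the odd cycle through $x_1$ gives one inequality, and the fact that edges inside $S\setminus V(C_i)$ must be covered by vertices of $S\setminus V(C_i)$ gives the other) replaces that citation and is sound, as is the observation that the remaining straddling subgraphs are forests and hence decomposable. There is no circularity, since the Rees-algebra description does not depend on Lemma \ref{dt}. The trade-off: your route is heavier up front but self-contained, and it delivers Lemma \ref{lt} and Theorem \ref{t2} as by-products; the paper's route stays at the level of individual vertex covers, which it needs anyway for Lemma \ref{lt}, but leans on a factorization claim that your approach is what actually justifies.
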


\begin{proof}
Consider the set\\ $T=\{I^i(c_1)^s(c_2)^b \mid {i+(n+1)s+(m+1)b \geq t \mbox{ and } 2i+(2n+1)s+(2m+1)b \leq 2t-1 } \}.$
From the definition of $D(t)$ it is clear that $T \subseteq D(t).$
Let $x^{\underline{a}}$ be a monomial in $D(t)$. Write $x^{\underline{a}}=(c_1)^s(c_2)^b x^{\underline{d}}$ such that $s$ and $b$ are as large as possible. Now $x^{\underline{d}}$ can be written in the form $E(x^{\underline{d}})A(x^{\underline{d}}),$ where $E(x^{\underline{d}})=  e_{i_1}^{b_{i_1}}\cdots e_{i_k}^{b_{i_k}}f_{j_1}^{b_{j_1}}\cdots f_{j_k}^{b_{j_k}}$ with $b_{i_1}+\dots+b_{i_k}+b_{j_1}+\dots+b_{j_k}=i,$ and $A(x^{\underline{d}})$ is a product of ancillaries of $x^{\underline{d}}$.

 Then from Lemma \ref{bm} we can find a minimal vertex cover $V$ such that $W_V(x^{\underline{d}})=W_V(E(x^{\underline{d}}))=i,$ which implies that $W_V(x^{\underline{a}})=(n+1)s+(m+1)b+i=W_V(E(x^{\underline{d}})(c_1)^s(c_2)^b)\geq t.$ As for any other minimal vertex cover $V^{\prime}$ we have $W_{V^{\prime}}(E(x^{\underline{d}})(c_1)^s(c_2)^b)\geq i+(n+1)s+(m+1)b\geq t$, therefore $E(x^{\underline{d}})(c_1)^s(c_2)^b \in D(t).$ Hence $\langle T\rangle=\langle (D(t))\rangle.$


\end{proof}

%
%

\begin{cor}
$(D(n+1))=(c_1).\label{3c1}$
\end{cor}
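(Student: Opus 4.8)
The plan is to read off $(D(n+1))$ directly from Lemma \ref{dt} by setting $t=n+1$, and then to solve the resulting pair of linear constraints over the nonnegative integers. With $t=n+1$ the two conditions defining a generator $I^i(c_1)^s(c_2)^b$ become
$$ i+(n+1)s+(m+1)b\geq n+1 \qquad\text{and}\qquad 2i+(2n+1)s+(2m+1)b\leq 2n+1. $$
First I would check that the triple $(i,s,b)=(0,1,0)$ satisfies both: the weight condition reads $n+1\geq n+1$ and the degree condition reads $2n+1\leq 2n+1$. Since this triple contributes exactly the ideal $I^0(c_1)^1(c_2)^0=(c_1)$, this already yields the inclusion $(c_1)\subseteq(D(n+1))$.

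For the reverse inclusion I would argue that $(0,1,0)$ is the \emph{only} triple $(i,s,b)\in\N^3$ meeting both constraints; then $(D(n+1))$ is generated by this single product and equals $(c_1)$. The argument is a short elimination whose crucial input is that the two cycles have different lengths, so that $n<m$, i.e. $2m+1>2n+1$. If $b\geq 1$, then $(2m+1)b\geq 2m+1>2n+1$, so the left-hand side of the degree constraint already exceeds $2n+1$; hence $b=0$. If $s\geq 2$, then $(2n+1)s\geq 2(2n+1)>2n+1$, again violating the degree bound; hence $s\in\{0,1\}$. When $s=0$ the weight condition forces $i\geq n+1$ while the degree condition forces $2i\leq 2n+1$, i.e. $i\leq n$, a contradiction; when $s=1$ the degree condition forces $2i\leq 0$, hence $i=0$. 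Thus $(i,s,b)=(0,1,0)$ is the only surviving triple, which completes the proof.

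Since Lemma \ref{dt} already supplies the generating description of $(D(t))$, there is no genuine obstacle here beyond this finite arithmetic check; the one point that must be handled with care is the hypothesis $n<m$. The strict inequality is exactly what eliminates the contribution of $c_2$: if instead $n=m$, the triple $(0,0,1)$ would also satisfy both constraints and would contribute $c_2\notin(c_1)$, so the conclusion would fail. Accordingly, I would state explicitly at the outset that $G$ is the clique sum of two odd cycles of \emph{different} lengths, so that $n<m$ is in force throughout.
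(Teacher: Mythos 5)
Your proof is correct and follows essentially the same route as the paper: reduce to the two inequalities from Lemma \ref{dt} with $t=n+1$ and observe that $(i,s,b)=(0,1,0)$ is the unique nonnegative solution. The paper merely asserts uniqueness, whereas you carry out the elimination explicitly and rightly flag that the hypothesis $n<m$ is what rules out the triple $(0,0,1)$ (consistent with Corollary \ref{t3}, which handles $n=m$).
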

\begin{proof}
  From Lemma \ref{dt} it follows that we need to find  all the possible values of $(i,s,b)$ such that  $i+(n+1)s+(m+1)b \geq n+1$ and $2i+(2n+1)s+(2m+1)b \leq 2n+1.$ Then it is clear that $(0,1,0)$ is the only possible value of $(i,s,b)$. Therefore $D(n+1)$ will contain only $c_1.$
\end{proof}

\begin{cor}\label{t3}
  If $C_1$ and $C_2$ are of same length say $2n+1$ then $(D(n+1))=(c_1,c_2).$
\end{cor}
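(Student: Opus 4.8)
The plan is to specialize Lemma \ref{dt} to the symmetric case $m=n$ and to the value $t=n+1$, which turns the statement into a small Diophantine feasibility problem exactly as in the proof of Corollary \ref{3c1}. Setting $m=n$, the generating set of $(D(n+1))$ consists of the monomials $I^i(c_1)^s(c_2)^b$ whose exponent triples $(i,s,b)\in\mathbb{N}^3$ satisfy (i) $i+(n+1)(s+b)\geq n+1$ and (ii) $2i+(2n+1)(s+b)\leq 2n+1$. The first thing I would record is that both constraints depend on $s$ and $b$ only through the sum $s+b$; this pooling is precisely what produces the extra generator compared with Corollary \ref{3c1}.

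Next I would run a case analysis on $\sigma:=s+b$. If $\sigma=0$, then (ii) forces $i\leq n$ while (i) forces $i\geq n+1$, a contradiction, so there are no solutions. If $\sigma\geq 2$, then $(2n+1)\sigma\geq 2(2n+1)>2n+1$, so (ii) gives $2i<0$, again impossible. Hence every admissible triple has $\sigma=1$; substituting this into (ii) yields $2i+(2n+1)\leq 2n+1$, i.e.\ $i=0$, and then (i) reads $n+1\geq n+1$, which holds. Therefore the only solutions are $(i,s,b)=(0,1,0)$ and $(0,0,1)$, corresponding exactly to the monomials $c_1$ and $c_2$, and I would conclude that the generating set of Lemma \ref{dt} collapses to $\{c_1,c_2\}$, so $(D(n+1))=(c_1,c_2)$.

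The computation itself is routine, so I do not expect a genuine obstacle; the only point that merits care is explaining why the symmetric case diverges from the asymmetric one in Corollary \ref{3c1}. There the triple $(0,0,1)$ is eliminated because, with $b=1$ and $s=0$, the second inequality becomes $2i+(2m+1)\leq 2n+1$, which is infeasible once $m>n$. The moment $m=n$, this obstruction disappears and $c_2$ survives on an equal footing with $c_1$. Accordingly, the main thing to verify carefully is that the two inequalities of Lemma \ref{dt} are read with $m$ set equal to $n$, and that no further solutions slip in at $\sigma=1$ beyond forcing $i=0$.
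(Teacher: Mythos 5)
Your proof is correct and takes essentially the same route as the paper: specialize Lemma \ref{dt} to $m=n$, $t=n+1$, and solve the resulting pair of inequalities in $(i,s,b)$ to find exactly the two triples $(0,1,0)$ and $(0,0,1)$. The only difference is that the paper merely asserts these are the only solutions, whereas you carry out the case analysis on $s+b$ explicitly.
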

\begin{proof}
  In Lemma \ref{dt} put $m=n$, then we need to find all the possible values of $(i,s,b)$ such that  $i+(n+1)(s+b) \geq n+1$  and  $2i+(2n+1)(s+b) \leq 2n+1.$ Observe that $(0,1,0),(0,0,1)$ are the only possible values of $(i,s,b)$ . Thus $D(n+1)=\{c_1,c_2\}.$
\end{proof}

All the generators of $D(t)$ described in Lemma \ref{dt} need not be minimal. In the next lemma we identify the minimal generators of $D(t)$ for $n+1\leq t\leq m+1$.

\begin{cor}\label{3dmin}
  The minimal set of generators of the ideal $(D(t))$ for $n+1\leq t \leq m+1$ is given by the following set $$D_{\min}(t)=\{I^i{(c_1)}^s{(c_2)}^b \mid i+(n+1)s+(m+1)b=t \mbox{ and } i<t\}.$$
\end{cor}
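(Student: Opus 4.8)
The plan is to prove two statements: that the blocks $I^i c_1^s c_2^b$ with $i+(n+1)s+(m+1)b=t$ (which forces $i<t$) already generate $(D(t))$, so the blocks with strict inequality in Lemma~\ref{dt} are redundant; and that none of these equality-blocks can be removed.

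For generation, I would begin from the generating set of Lemma~\ref{dt} and repeatedly shrink any block whose weight $w:=i+(n+1)s+(m+1)b$ strictly exceeds $t$. There are three reduction moves: replace $I^i$ by $I^{i-1}$ when $i\ge 1$; replace one factor $c_1$ by $I^n$, using $c_1\in I^n$, when $s\ge 1$; and replace one factor $c_2$ by $I^m$, using $c_2\in I^m$, when $b\ge 1$. Each move produces a block $g'$ with $g\subseteq g'$ as ideals, decreases $w$ by exactly $1$, and decreases the degree $2i+(2n+1)s+(2m+1)b$, so the condition $2i+(2n+1)s+(2m+1)b\le 2t-1$ of Lemma~\ref{dt} is preserved. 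As $w>t>0$ guarantees $(i,s,b)\neq(0,0,0)$, some move is always available, and iterating lowers $w$ to exactly $t$ while keeping the block valid. Since a valid block with $w=t$ has degree $2t-(s+b)\le 2t-1$, it satisfies $s+b\ge 1$ and hence $i<t$; thus every block of Lemma~\ref{dt} is absorbed into $D_{\min}(t)$, which therefore generates $(D(t))$.

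For irredundancy, to each block $B=I^i c_1^s c_2^b\in D_{\min}(t)$ I would attach the explicit monomial $\mu=e_1^{\,i}c_1^s c_2^b$ with $e_1=x_1x_2$, and show it lies in no other block of $D_{\min}(t)$. In $\mu$ the variables $x_3,\dots,x_{2n+1}$ occur to exponent exactly $s$ and $y_2,\dots,y_{2m+1}$ to exponent exactly $b$. If $\mu\in B'=I^{i'}c_1^{s'}c_2^{b'}$ then $c_1^{s'}c_2^{b'}\mid\mu$, and reading off these non-shared variables forces $s'\le s$ and $b'\le b$. On the other hand $\mu$ must be divisible by a generator of $B'$, which has degree $2t-(s'+b')$, so $2t-(s+b)=\deg\mu\ge 2t-(s'+b')$ yields $s'+b'\ge s+b$. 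Together these force $s'=s$ and $b'=b$, and then the weight equation $i'+(n+1)s'+(m+1)b'=t$ forces $i'=i$, i.e. $B'=B$. Since $(D(t))$ is a monomial ideal, a monomial lies in a sum of monomial ideals only if it lies in one of them; hence $\mu$ is absent from the sum of the remaining blocks, and $B$ cannot be dropped.

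I expect irredundancy to be the crux: pairwise incomparability of blocks does not by itself give a minimal generating set, so the real content is the single witness $\mu=e_1^{\,i}c_1^s c_2^b$, together with the observation that divisibility forces $(s',b')$ downward while the fixed weight $t$ (equivalently the degree) forces $s'+b'$ upward, pinning $(i',s',b')=(i,s,b)$. The hypothesis $n+1\le t\le m+1$ serves to keep the index set simple—it forces $b=0$ in every equality-block except the single block $c_2$ arising when $t=m+1$—so that the displayed description of $D_{\min}(t)$ is exactly the one needed; I would record this reduction of cases at the start.
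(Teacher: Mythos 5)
Your proof is correct and follows the same basic strategy as the paper's: absorb every over-weight block of Lemma~\ref{dt} into an equality block via a weight-lowering reduction, then verify the equality blocks are irredundant. Yours is in fact the more complete version --- the paper uses only the reduction $I^i\mapsto I^{i-1}$ (which does not apply to over-weight blocks with $i=0$, such as $c_1^3$ when $n=1$ and $t=m+1=5$; your moves $c_1\mapsto I^n$ and $c_2\mapsto I^m$ cover these) and dismisses irredundancy of the equality blocks as ``clear,'' whereas your witness monomial $e_1^i c_1^s c_2^b$ together with the divisibility-versus-degree pinning supplies the missing argument.
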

\begin{proof}
If $b\geq 1$ then by Corollary \ref{dt} there is only one possible generator namely $c_2$  corresponding to $(i,s,b)=(0,0,1)$, which is in fact a minimal generator for $(D(t))$ and this is only for $t=m+1.$ To find the remaining minimal generators let us  assume that $b=0.$ Then it is clear that $I^ic_1^s$ with $i+(n+1)s=t$ are minimal generators.
Let us now assume that $I^ic_1^s$ with $i+(n+1)s>t$ is a minimal generator.  But since $(i-1)+(n+1)s\geq t$ therefore $I^{i-1}c_1^s\in (D(t))$ which will contradict that $I^ic_1^s$ is a minimal generator of $(D(t))$. Hence the proof.
\end{proof}

\begin{thm}

  Let $G$ be the clique-sum of two odd cycles $C_1=(x_1,\ldots,x_{2n+1})$ and $C_2=(x_1,y_2,\ldots,y_{2m+1})$ joined at a vertex $x_1$ with $n\leq m.$ Let $I=I(G)$ be the edge ideal. Then for any $ t\in \mathbb{N}$, $ \alpha{(I^{(t)})}$
     =$ 2t- \Bigl\lfloor\dfrac{t}{n+1}\Bigr\rfloor.\qquad$\\
     Particularly, the Waldschmidt constant of $I$ is given by \begin{align*}\widehat{\alpha}{(I)}=\frac{2n+1}{n+1}.\end{align*}

\end{thm}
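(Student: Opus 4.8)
The plan is to read off $\alpha(I^{(t)})$ from the minimal-degree monomial of $I^{(t)}$, exploiting the decomposition $I^{(t)}=(L(t))+(D(t))$ together with the explicit generating set of $(D(t))$ supplied by Lemma \ref{dt}. Every monomial of $(L(t))$ has degree at least $2t$, while the generators of $(D(t))$ are exactly the monomials $I^i(c_1)^s(c_2)^b$, of degree $2i+(2n+1)s+(2m+1)b$. Since the minimal monomial generators of the monomial ideal $I^{(t)}$ must lie in the generating set $L(t)\cup D(t)$, it suffices to minimise degree over these two families; the $(L(t))$ side never beats $2t$, so the contest is really the degree minimisation over the $(D(t))$ generators.

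For the upper bound I would exhibit an explicit element. Put $s_0=\lfloor t/(n+1)\rfloor$ and $i_0=t-(n+1)s_0\ge 0$, and take $M=E\cdot(c_1)^{s_0}$, where $E$ is a product of $i_0$ edges of $G$. Because the vertex cover number of the odd cycle $C_{2n+1}$ is $n+1$, every minimal vertex cover $V'$ of $G$ meets $V(C_1)$ in at least $n+1$ vertices, so $w_{V'}(c_1)\ge n+1$, while each edge contributes weight at least $1$. Hence $w_{V'}(M)\ge i_0+(n+1)s_0=t$ for every $V'$, so $M\in I^{(t)}$ by Lemma \ref{psymbolic}, and $\deg M=2i_0+(2n+1)s_0=2t-s_0=2t-\lfloor t/(n+1)\rfloor$. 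This gives $\alpha(I^{(t)})\le 2t-\lfloor t/(n+1)\rfloor$.

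For the lower bound the key identity is $\deg\!\big(I^ic_1^sc_2^b\big)=2w-(s+b)$, where $w=i+(n+1)s+(m+1)b$ is the common vertex weight of this generator. Since $i\ge 0$ and $n\le m$, one has $(n+1)(s+b)\le (n+1)s+(m+1)b\le w$, hence $s+b\le\lfloor w/(n+1)\rfloor$ and therefore $\deg\ge 2w-\lfloor w/(n+1)\rfloor=:g(w)$. A one-line check shows $g$ is strictly increasing in $w$ (its forward difference is $2$ minus a floor increment of $0$ or $1$), and every generator of $(D(t))$ satisfies $w\ge t$; combined with the bound $\ge 2t$ on $(L(t))$ this yields $\alpha(I^{(t)})\ge g(t)=2t-\lfloor t/(n+1)\rfloor$. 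Matching the two bounds gives the formula, and the Waldschmidt constant follows immediately as $\widehat{\alpha}(I)=\lim_{t\to\infty}\frac{2t-\lfloor t/(n+1)\rfloor}{t}=2-\frac{1}{n+1}=\frac{2n+1}{n+1}$.

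The main obstacle is the lower bound: one must show that among all ways of accumulating the required weight $t$ (plain edges, copies of $c_1$, copies of $c_2$) the shorter cycle $c_1$ is the most degree-efficient carrier of weight. This is precisely the content of the inequality $s+b\le\lfloor w/(n+1)\rfloor$ together with the monotonicity of $g$, and it is exactly here that the hypothesis $n\le m$ — that the cover number $n+1$ of the shorter cycle is minimal — is used.
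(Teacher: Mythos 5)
Your proof is correct and follows essentially the same route as the paper: both reduce the computation to the generators $I^i c_1^s c_2^b$ of $(D(t))$ from Lemma \ref{dt} (since every element of $(L(t))$ has degree at least $2t$), rewrite the degree as $2w-(s+b)$ with $w=i+(n+1)s+(m+1)b$, and maximize $s+b$. Your execution of that maximization --- the single inequality $(n+1)(s+b)\le w$ giving $s+b\le\lfloor w/(n+1)\rfloor$, together with the monotonicity of $g(w)=2w-\lfloor w/(n+1)\rfloor$ to handle generators of weight strictly greater than $t$ --- is cleaner and slightly more complete than the paper's repeated-division argument, and your explicit witness $E\cdot c_1^{\lfloor t/(n+1)\rfloor}$ makes the upper bound self-contained.
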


\begin{proof}Since the degree of the elements of $D(t)$ is strictly less than the degree of the elements of $L(t),$ so if $D(t)$ is nonempty then minimal generating degree term will come from $D(t).$ Note that $\alpha(I^{(t)})=\min\{2i+(2n+1)s+(2m+1)b \mid i+(n+1)s+(m+1)b=t\}.$ Let $d=2i+(2n+1)s+(2m+1)b=2t-(s+b).$ Then $d$ will be minimum when $s+b$ will be maximum. Write $t=k(n+1)+r,$ where $0\leq r\leq n.$ In general all the possible choices of $(i,s,b)$ are $(r_1,k_1,k_2),$ where $t=k_1(n+1)+k_2(m+1)+r_1.$ We will prove that $k\geq k_1+k_2.$ Write $k_2(m+1)=k_3(n+1)+r_2,$ where $r_2\leq n.$ Therefore $k_3\geq k_2.$ So $t=(k_1+k_3)(n+1)+(r_1+r_2).$ Again write $r_1+r_2=k_4(n+1)+r_4,$ where $r_4\leq n.$ Then $t=(k_1+k_2+k_3)(n+1)+r_4,$ with $r_4\leq n$, which implies that $k=k_1+k_3+k_4$ and $r=r_4.$ Hence $k\geq k_1+k_3 \geq k_1+k_2.$ Thus $\alpha(I^{(t)})=2t-k=2t- \Bigl\lfloor\dfrac{t}{n+1}\Bigr\rfloor.$ Therefore $\widehat{\alpha}{(I)} = 2- \frac{1}{n+1}=\frac{2n+1}{n+1}$.

\end{proof}

\begin{lem}\label{ld}
Let $G$ be the clique sum of two odd cycles  $C_1=(x_1,\ldots,x_{2n+1})$ and $C_2=(x_1,y_2,\ldots,y_{2m+1})$ joined at a vertex $x_1$ with $n<m$.  Let $I=I(G)$ be the edge ideal of $G$ and $c_1=x_1x_2\cdots x_{2n+1},$ $c_2=x_1y_2\cdots y_{2m+1}.$ Let $x^{\underline{a}}\in I^{(t)}$ be a minimal generator of $I^{(t)}$ such that  $x^{\underline{a}}\in (c_1)$ and $x^{\underline{a}}\notin I^{t}$ then $x^{\underline{a}}\in (D(t))$ for $n+1\leq t\leq m+1.$
\end{lem}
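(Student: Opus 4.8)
The plan is to argue by a dichotomy on $\deg(x^{\underline a})$, exploiting that the decomposition $I^{(m+1)}=(L(m+1))+(D(m+1))$ merely sorts the monomials of $I^{(m+1)}$ by degree: a monomial with $w_{V'}\geq m+1$ for all minimal vertex covers $V'$ lies in $L(m+1)$ when its degree is at least $2(m+1)$ and in $D(m+1)$ otherwise. First I would dispose of the easy case. If $\deg(x^{\underline a})<2(m+1)$, then since $x^{\underline a}\in I^{(m+1)}$ already guarantees $w_{V'}(x^{\underline a})\geq m+1$ for every minimal vertex cover $V'$ (Lemma \ref{psymbolic}), the monomial $x^{\underline a}$ satisfies the defining conditions of $D(m+1)$ verbatim, so $x^{\underline a}\in (D(m+1))$ with nothing further to check. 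Note that this step uses neither minimality nor the hypothesis $x^{\underline a}\in(c_1)$.

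The whole content therefore lies in ruling out the remaining case $\deg(x^{\underline a})\geq 2(m+1)$, that is, $x^{\underline a}\in L(m+1)$. Here I would first apply Lemma \ref{lt}, which gives $(L(m+1))\subseteq I^{m+1}+(c_1,c_2)$; since $x^{\underline a}$ is a monomial and $x^{\underline a}\notin I^{m+1}$, it must be divisible by $c_1$ or $c_2$, consistent with the hypothesis $c_1\mid x^{\underline a}$. The goal is then to produce a \emph{proper} divisor of $x^{\underline a}$ that still lies in $I^{(m+1)}$, which would contradict $x^{\underline a}$ being a minimal generator and show this case is vacuous. Concretely, I would try to exhibit one of the generators $I^{i}(c_1)^{s}(c_2)^{b}$ of $(D(m+1))$ from Lemma \ref{dt} as a divisor of $x^{\underline a}$: every such generator has degree $2i+(2n+1)s+(2m+1)b\leq 2m+1<2(m+1)\leq\deg(x^{\underline a})$, so it is automatically a proper divisor, and being an element of $(D(m+1))\subseteq I^{(m+1)}$ it contradicts minimality. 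Thus the degree of a minimal generator as in the statement is forced to be at most $2m+1$, the easy case applies, and $x^{\underline a}\in(D(m+1))$ in all cases.

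The main obstacle is precisely the step of locating a $(D(m+1))$-generator dividing $x^{\underline a}$ (equivalently, of showing that such an $x^{\underline a}$ would already lie in $I^{m+1}$). The naive attempt---writing $x^{\underline a}=c_1\cdot h$ and hoping that $h\in I^{m-n}$, so that $I^{m-n}c_1$ divides $x^{\underline a}$---does not work directly, because edge counts are not additive across this factorisation: one only has the inequality $b(x^{\underline a})\geq n+b(h)$, which points the wrong way, and $w_{V'}(c_1)=|V'\cap V(C_1)|$ can exceed $n+1$ for some minimal vertex covers $V'$, so one cannot simply subtract the weight of $c_1$ to deduce $h\in I^{(m-n)}$. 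I would instead control this by splitting the exponent vector of $x^{\underline a}$ along the two cycles (distributing the exponent of the shared vertex $x_1$ between the $C_1$-part and the $C_2$-part), applying the single-odd-cycle identity $(L(t))=I(C)^t$ of \cite[Theorem 4.4]{janssen2017comparing} to each cycle separately together with the optimal-form bookkeeping of Lemma \ref{2optimal}, and then recombining the two edge factorisations. Forcing the two matching sizes to add up to at least $m+1$ while keeping the $c_1$-factor intact is the delicate point, and it is exactly here that the hypotheses $c_1\mid x^{\underline a}$ and $x^{\underline a}\notin I^{m+1}$ are genuinely used.
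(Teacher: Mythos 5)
Your reduction is set up correctly: the easy half (if $\deg(x^{\underline a})<2(m+1)$ then $x^{\underline a}\in D(m+1)$ by definition) is fine, and you have correctly located the entire content of the lemma in excluding the case $\deg(x^{\underline a})\geq 2(m+1)$. But that exclusion is exactly what you do not prove. Your plan is to exhibit a proper divisor of $x^{\underline a}$ of the form $I^i(c_1)^s(c_2)^b$ lying in $(D(m+1))$ --- which is essentially the conclusion of the lemma restated --- and in your last paragraph you concede that ``forcing the two matching sizes to add up to at least $m+1$ while keeping the $c_1$-factor intact is the delicate point,'' offering only a strategy (split the exponent vector along the two cycles, apply the single-cycle result, recombine) rather than an argument. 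As written, the proposal is a correct framing of the problem together with an acknowledged gap at its only hard step, so it does not constitute a proof.

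For comparison, the paper avoids your dichotomy altogether and shows directly that the bad case cannot occur, by exploiting minimality \emph{before} estimating the degree. Write $x^{\underline a}=E(x^{\underline a})A(x^{\underline a})$ with $E(x^{\underline a})=c_1^{s}\,e_{i_1}^{b_{i_1}}\cdots f_{j_k}^{b_{j_k}}$ (the $c_1$-power and edge factors, with $i=\sum b$) and $A(x^{\underline a})$ the ancillary part. Choosing a minimal vertex cover $V'$ that avoids the ancillary vertices and meets each edge factor and each copy of $c_1$ minimally gives $W_{V'}(x^{\underline a})=W_{V'}(E(x^{\underline a}))=(n+1)s+i$, and every other cover weights $E(x^{\underline a})$ at least as much; hence $E(x^{\underline a})\in I^{(m+1)}$ already. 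Minimality then forces $x^{\underline a}=E(x^{\underline a})$ and $(n+1)s+i=m+1$, whence
\[
\deg(x^{\underline a})=(2n+1)s+2i=2\bigl[(n+1)s+i\bigr]-s=2(m+1)-s<2(m+1)
\]
since $s\geq 1$ by the hypothesis $x^{\underline a}\in(c_1)$. This ancillary-stripping step is precisely the ingredient your sketch is missing: rather than hunting for a $(D(m+1))$-generator dividing a hypothetical large-degree monomial, one shows that a minimal generator divisible by $c_1$ has no ancillary factors at all, and its degree is then computed exactly and is automatically below the $L(m+1)$ threshold.
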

\begin{proof}Write $x^{\underline{a}}=c_1^sx^{\underline{d}}$ such that $s$ is as large as possible. Now $x^{\underline{d}}$ can be written in the form $E(x^{\underline{d}})A(x^{\underline{d}}),$ where $E(x^{\underline{d}})=  e_{i_1}^{b_{i_1}}\cdots e_{i_k}^{b_{i_k}}f_{j_1}^{b_{j_1}}\cdots f_{j_k}^{b_{j_k}}$ with $b_{i_1}+\dots+b_{i_k}+b_{j_1}+\dots+b_{j_k}=i$ and $A(x^{\underline{d}})$ is a product of ancillaries of $x^{\underline{d}}$. By Proposition \ref{lt}, we can choose a minimal vertex cover $V^{\prime}$ excluding the vertices that appear in ancillaries such that $W_{V^{\prime}}(x^{\underline{d}})=W_{V^{\prime}}(E(x^{\underline{d}}))=i, \text { which implies that } W_{V^{\prime}}(x^{\underline{a}})=W_{V^{\prime}}(c_1^sE(x^{\underline{d}}))=(n+1)s+i\geq t.$
 As for any minimal vertex cover $V$, $W_V(c_1^sE(x^{\underline{d}}))\geq W_{V^{\prime}}(c_1^sE(x^{\underline{d}})),$ so $c_1^sE(x^{\underline{d}})\in I^{(t)}.$ Since we are interested in minimal generators so we can take the monomial as $x^{\underline{a}}=c_1^sE(x^{\underline{d}}).$ Then $\deg(x^{\underline{a}})=(2n+1)s+2i.$ Now from Corollary \ref{3dmin} it follows that $(2n+1)s+2i=2[(n+1)s+i]-s=2t-s<2t.$
\end{proof}

\begin{thm}\label{3differentlength}
Let $G$ be the clique sum of two odd cycles  $C_1=(x_1,\ldots,x_{2n+1})$ and $C_2=(x_1,y_2,\ldots,y_{2m+1})$ joined at a vertex $x_1$ with $n<m$.  Let $I=I(G)$ be the edge ideal of $G$. Let $c_1=x_1\cdots x_{2n+1}$ and $c_2=x_1y_2\cdots y_{2m+1}$. Then
\begin{enumerate}
  \item For $1\leq t\leq n,$ we have $I^{(t)}=I^t.$
  \item $I^{(n+1)}= I^{n+1}+(c_1).$
  \item For $n+2\leq t\leq m+1$ we have $I^{(t)}=I^{t}+(D(t)).$
\end{enumerate}

\end{thm}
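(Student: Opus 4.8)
The plan is to run everything through the decomposition $I^{(t)}=(L(t))+(D(t))$, combined with the always-valid inclusion $I^t\subseteq I^{(t)}$, so that in each of the three parts only the reverse inclusion needs work. Two facts drive the argument. First, $c_1=x_1\cdots x_{2n+1}$ is the product of the $n$ disjoint edges $x_1x_2,x_3x_4,\dots,x_{2n-1}x_{2n}$ times the leftover vertex $x_{2n+1}$, so $c_1\in I^n$, and likewise $c_2\in I^m$. Second, Lemma \ref{lt} traps $(L(t))$ inside $I^t+(c_1,c_2)$, while Lemma \ref{dt} and its corollaries describe $(D(t))$ exactly. So for each part I would control $(L(t))$ via Lemma \ref{lt} and $(D(t))$ via Lemma \ref{dt}.

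For part (1), fix $1\le t\le n$. I would first observe that the generating set of $(D(t))$ in Lemma \ref{dt} is empty: with $s=b=0$ the inequalities force $i\ge t$ and $2i\le 2t-1$, which is impossible; and with $s\ge 1$ or $b\ge 1$ the degree inequality gives $2i+(2n+1)s+(2m+1)b\ge 2n+1>2t-1$, again impossible. Hence $(D(t))=0$. Since $t\le n\le m$, both $c_1\in I^n\subseteq I^t$ and $c_2\in I^m\subseteq I^t$, so $(c_1,c_2)\subseteq I^t$ and Lemma \ref{lt} gives $(L(t))\subseteq I^t$; thus $I^{(t)}=(L(t))\subseteq I^t$. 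For part (2) with $t=n+1$, Corollary \ref{3c1} gives $(D(n+1))=(c_1)$, and since $n<m$ we have $c_2\in I^m\subseteq I^{n+1}$, so Lemma \ref{lt} improves to $(L(n+1))\subseteq I^{n+1}+(c_1)$. This yields $I^{(n+1)}\subseteq I^{n+1}+(c_1)$, and the reverse inclusion holds because $c_1\in(D(n+1))\subseteq I^{(n+1)}$.

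Part (3) is the substantive one, and here I would argue on minimal monomial generators. The inclusion $I^{m+1}+(D(m+1))\subseteq I^{(m+1)}$ is immediate. For the reverse, let $x^{\underline{a}}$ be a minimal monomial generator of $I^{(m+1)}=(L(m+1))+(D(m+1))$. As a minimal generator of a sum of monomial ideals, $x^{\underline{a}}$ lies in $D(m+1)$ (done) or in $L(m+1)$. In the latter case Lemma \ref{lt} places $x^{\underline{a}}$ in the monomial ideal $I^{m+1}+(c_1)+(c_2)$, so it is divisible by a generator of one summand: either $x^{\underline{a}}\in I^{m+1}$ (done), or $x^{\underline{a}}\in(c_2)$, or $x^{\underline{a}}\in(c_1)$. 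For the $(c_2)$ case, taking $(i,s,b)=(0,0,1)$ in Lemma \ref{dt} shows $c_2$ satisfies $m+1\ge m+1$ and $2m+1\le 2(m+1)-1$, so $c_2\in D(m+1)$ and hence $x^{\underline{a}}\in(c_2)\subseteq(D(m+1))$.

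The remaining case $x^{\underline{a}}\in(c_1)$ with $x^{\underline{a}}\notin I^{m+1}$ is exactly the hypothesis of Lemma \ref{ld}, which concludes $x^{\underline{a}}\in(D(m+1))$; this is where the main obstacle sits. The point is that one cannot absorb $(c_1)$ into $(D(m+1))$ wholesale, because $c_1$ is supported on $C_1$ alone and a minimal vertex cover of $C_1$ (of size $n+1<m+1$) witnesses $c_1\notin I^{(m+1)}$, so $(c_1)\not\subseteq I^{(m+1)}$. What rescues the argument is that only the $(c_1)$-multiples that are themselves minimal generators of $I^{(m+1)}$ occur, and Lemma \ref{ld} shows each such monomial has degree strictly below $2(m+1)$ while retaining weight at least $m+1$ against every minimal vertex cover, which is precisely membership in $D(m+1)$. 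Assembling the cases shows every minimal generator of $I^{(m+1)}$ lies in $I^{m+1}+(D(m+1))$, completing the proof; I expect the bookkeeping of this last case, and the careful use of the minimal-generator structure to legitimately invoke Lemma \ref{ld}, to be the delicate part.
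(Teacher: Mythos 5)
Your proposal is correct and follows essentially the same route as the paper: parts (2) and (3) are exactly the paper's argument (combine Lemma \ref{lt} with Corollary \ref{3c1}, resp.\ with Lemma \ref{ld}, using $c_2\in I^m\subseteq I^{n+1}$), just with the reduction to minimal monomial generators and the case analysis spelled out in more detail. The only divergence is part (1), where the paper simply cites \cite[Corollary 4.5]{lam2015associated}, while you derive $I^{(t)}=I^t$ internally by checking that the generating set of $(D(t))$ from Lemma \ref{dt} is empty for $2\le t\le n$ and that $(c_1,c_2)\subseteq I^t$; this is a valid, self-contained alternative.
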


\begin{proof}
\begin{enumerate}
\item The proof follows from \cite[Corollary 4.5]{lam2015associated}.
\item Since $c_2\in I^m$ and $n+1\leq m$ thus $c_2\in I^{n+1}$. Then by Proposition \ref{lt},  $L(n+1)\subseteq I^{n+1}+c_1.$ Thus by Corollary \ref{3c1}, $I^{(n+1)}\subseteq I^{n+1}+c_1.$ Hence $I^{(n+1)}=I^{n+1}+c_1.$
\item From Lemma \ref{ld} and Proposition \ref{lt}, it follows that $(L(t))\subseteq I^{t}+(D(t)).$ So $I^{(t)}\subseteq I^{t}+(D(t)).$ Hence $I^{(t)}=I^{t}+(D(t)).$
\end{enumerate}
\end{proof}

\begin{cor}\label{3samelength}
Let $G$ be the clique sum of two odd cycles  $C_1=(x_1,\ldots,x_{2n+1})$ and $C_2=(x_1,y_2,\ldots,y_{2n+1})$ of same length joined at a vertex $x_1$.  Let $I=I(G)$ be the edge ideal of $G$ and $c_1=x_1\cdots x_{2n+1}$ and $c_2=x_1y_2\cdots y_{2n+1}$. Then
 \begin{enumerate}
                \item $I^{(s)}=I^s$ for $1\leq s\leq n$.
                \item $I^{(n+1)}= I^{n+1} + (x_1\cdots x_{2n+1}) + (x_1y_2\cdots y_{2n+1}).$
              \end{enumerate}
\end{cor}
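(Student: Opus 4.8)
The plan is to treat the two parts separately, in each case decomposing $I^{(t)}=(L(t))+(D(t))$ and controlling the two summands with the machinery already developed for the equal-length case. Throughout I specialize $m=n$, so that $c_1=x_1\cdots x_{2n+1}$ and $c_2=x_1y_2\cdots y_{2n+1}$ are the full cycle monomials of $C_1$ and $C_2$; note that this is exactly the value of $c_2$ appearing in Lemma \ref{lt}. The one structural fact I use repeatedly is that $c_1,c_2\in I^{(n+1)}$: restricting any minimal vertex cover $V$ of $G$ to $C_i$ yields a vertex cover of the odd cycle $C_{2n+1}$, which has at least $n+1$ vertices, so $w_V(c_i)\ge n+1$ for every $V$; moreover $c_i\in I^{n}$, since $c_i$ factors as a product of $n$ disjoint edges times a single leftover vertex.

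For part (1), fix $1\le s\le n$. First I would show $(D(s))=0$ directly from Lemma \ref{dt}: any candidate generator $I^i c_1^a c_2^b$ must satisfy $2i+(2n+1)(a+b)\le 2s-1\le 2n-1<2n+1$, which forces $a=b=0$, and then the two surviving requirements $i\ge s$ and $2i\le 2s-1$ are incompatible. Hence $I^{(s)}=(L(s))$. Since $c_1,c_2\in I^{n}\subseteq I^{s}$, Lemma \ref{lt} gives $(L(s))\subseteq I^{s}+(c_1,c_2)=I^{s}$. Combined with the trivial inclusion $I^{s}\subseteq I^{(s)}$ this yields $I^{(s)}=I^{s}$; equivalently, this is part (1) of the preceding theorem, whose proof via \cite[Corollary 4.5]{lam2015associated} does not use the strict inequality $n<m$.

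For part (2), I would combine the two refined descriptions at level $n+1$. Lemma \ref{lt} gives $(L(n+1))\subseteq I^{n+1}+(c_1,c_2)$, while Corollary \ref{t3}, which is stated precisely for two cycles of the same length, gives $(D(n+1))=(c_1,c_2)$. Adding these, $I^{(n+1)}=(L(n+1))+(D(n+1))\subseteq I^{n+1}+(c_1,c_2)$. The reverse inclusion is immediate: $I^{n+1}\subseteq I^{(n+1)}$ always, and $(c_1,c_2)=(D(n+1))\subseteq I^{(n+1)}$ by the definition of $D(n+1)$ (equivalently, by the structural fact above). Therefore $I^{(n+1)}=I^{n+1}+(c_1)+(c_2)$.

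The point to emphasize, and the only place where the equal-length hypothesis genuinely intervenes, is the contrast with the $n<m$ theorem. There $c_2$ has degree $2m+1$ and lies in $I^{m}\subseteq I^{n+1}$, so it is absorbed into $I^{n+1}$ and only $c_1$ survives as an extra generator. Here $c_2$ has degree $2n+1<2(n+1)$, so it cannot be divisible by a product of $n+1$ edges and hence $c_2\notin I^{n+1}$; being the full product over the odd cycle $C_2$, it persists as a second genuine generator. Consequently the real content of the corollary is already packaged in Lemma \ref{lt} and Corollary \ref{t3}, and the main thing to check is that both apply verbatim when $m=n$. This is immediate for Corollary \ref{t3}, and for Lemma \ref{lt} it follows by inspection of the case analysis in its proof, where none of the four cases uses the strict inequality $n<m$.
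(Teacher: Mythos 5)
Your proposal is correct and follows essentially the same route as the paper: part (2) is exactly the paper's argument (combine Lemma \ref{lt} with Corollary \ref{t3} and the trivial reverse inclusions), and part (1) matches the paper's citation of \cite[Corollary 4.5]{lam2015associated}, which you supplement with a direct check that $(D(s))=0$ for $s\leq n$ via Lemma \ref{dt}. Your explicit remark that Lemma \ref{lt}, though stated for $n<m$, applies verbatim when $m=n$ is a point the paper leaves implicit (it invokes that lemma in this corollary anyway), so your write-up is if anything slightly more careful.
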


\begin{proof}
  \begin{enumerate}
    \item The proof follows from \cite[Corollary 4.5]{lam2015associated}.
    \item The proof follows from Proposition \ref{lt} and Corollary \ref{t3}.
  \end{enumerate}
\end{proof}

In \cite[Lemma 3.1]{jayanthan} Jayanthan and Kumar have described the structure of the symbolic Rees algebra for the clique sum of same length odd cycles and computed the invariants. In the next theorem we describe the structure of the symbolic Rees algebra for the clique sum of two different length odd cycles joined at a single vertex using the description of $L(t)$ and $D(t)$.

\begin{thm}\label{t2}
\begin{enumerate}
\item

  Let $G$ be clique sum of two same length odd cycles $C_1=(x_1,\ldots,\\x_{2n+1})$ and $C_2=(x_1,y_2,\ldots,y_{2n+1})$ joined at a single vertex $x_1$. Let $I=I(G)$ be the edge ideal. Let $s\in \mathbb{N}$ and write $s=k(n+1)+r$ for some $k \in\mathbb{Z} $ and $0\leq r\leq n.$ Then $$I^{(s)}=\displaystyle  {\sum_{p+q=t=0}^{k}{I^{s-t(n+1)}(x_1\cdots x_{2n+1})^p(x_1y_2\cdots y_{2n+1})^q}}.$$

\item
\label{t1}
Let $G$ be the clique sum of two odd cycles  $C_1=(x_1,\ldots,x_{2n+1})$ and $C_2=(x_1,y_2,\ldots,\\y_{2m+1})$ joined at a vertex $x_1$ with $n<m$.  Let $I=I(G)$ be the edge ideal of $G$ and $c_1=x_1\cdots x_{2n+1}.$ Let $s=k_1(n+1)+r_1$, where $0\leq r_1\leq n$ and $s=k_2(m+1)+r_2$, where $0\leq r_2\leq m.$ Then\\
$$I^{(s)}=\displaystyle{\sum_{\substack{t_1,t_2\\0\leq t_1\leq k_1~and \\0\leq t_2\leq k_2 ~and \\ s-t_1(n+1)-t_2(m+1)\geq 0}}{I^{s-t_1(n+1)-t_2(m+1)}(c_1)^{t_1}(D_{\min}(m+1))^{t_2}}}.$$

\end{enumerate}
\end{thm}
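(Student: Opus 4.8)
The plan is to prove the equivalent reformulation in which $D_{\min}(m+1)$ is replaced by the single cycle monomial $c_2=x_1y_2\cdots y_{2m+1}$, namely to establish
$$I^{(s)}=\sum_{\substack{t_1,t_2\ge0\\ (n+1)t_1+(m+1)t_2\le s}} I^{\,s-(n+1)t_1-(m+1)t_2}(c_1)^{t_1}(c_2)^{t_2}=:J_s ,$$
and then to check that $\sum I^{\cdots}(c_1)^{t_1}(c_2)^{t_2}$ and $\sum I^{\cdots}(c_1)^{t_1}(D_{\min}(m+1))^{t_2}$ agree. The latter is a combinatorial reindexing: since $D_{\min}(m+1)=\{c_2\}\cup\{I^ic_1^{s'}\mid i+(n+1)s'=m+1,\ s'\ge1\}$ by the preceding corollary, any product of $t_2$ of its generators can, by transferring the $c_1$- and edge-factors of the mixed generators into the indices $t_1$ and $s-(n+1)t_1-(m+1)t_2$, be rewritten as a term $I^{\cdots}(c_1)^{t_1'}(c_2)^{t_2'}$ with $t_1'\le k_1,\ t_2'\le k_2$; the reverse containment is trivial because $c_2\in D_{\min}(m+1)$. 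Part (1) is the case $n=m$, where $D_{\min}(n+1)=\{c_1,c_2\}$ by Corollary \ref{t3} and the index set collapses to $p+q=t\le k$, so it follows from the same argument.

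The containment $J_s\subseteq I^{(s)}$ is immediate from multiplicativity of symbolic powers, $I^{(a)}I^{(b)}\subseteq I^{(a+b)}$: since $I^j\subseteq I^{(j)}$, $c_1\in I^{(n+1)}$ and $c_2\in I^{(m+1)}$, each summand lies in $I^{(s-(n+1)t_1-(m+1)t_2)}I^{((n+1)t_1)}I^{((m+1)t_2)}\subseteq I^{(s)}$.

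The substance is the reverse inclusion $I^{(s)}\subseteq J_s$, which I would prove by strong induction on $s$, the base cases $s\le n$ being $I^{(s)}=I^s$ (\cite{lam2015associated}). The engine is a \emph{peeling identity}: if $c_1\mid\mu$ then $\min_V w_V(\mu)=(n+1)+\min_V w_V(\mu/c_1)$, and likewise for $c_2$ with $m+1$; hence $\mu\in I^{(s)}$ forces $\mu/c_1\in I^{(s-(n+1))}$ (resp. $\mu/c_2\in I^{(s-(m+1))}$). Granting this, take a monomial $\mu\in I^{(s)}$. If $\mu\in I^s$ it is the $(t_1,t_2)=(0,0)$ term. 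Otherwise, writing $I^{(s)}=(L(s))+(D(s))$, Lemma \ref{lt} gives $(L(s))\subseteq I^s+(c_1,c_2)$, while by Lemma \ref{dt} every generator of $(D(s))$ has the form $I^ic_1^{s'}c_2^{b}$ with $s'+b\ge1$ (the degree bound rules out pure edge generators); in either case $\mu$ is divisible by $c_1$ or by $c_2$. Peeling the relevant cycle monomial and applying the induction hypothesis gives $\mu/c_1\in J_{s-(n+1)}$ (resp. $\mu/c_2\in J_{s-(m+1)}$), whence $\mu\in c_1 J_{s-(n+1)}\subseteq J_s$ (resp. $c_2 J_{s-(m+1)}\subseteq J_s$); these last inclusions follow by the index shift $t_1\mapsto t_1+1$ (resp. $t_2\mapsto t_2+1$), the range condition $t_1+1\le k_1$ being automatic. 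The low-degree consistency — that a $c_2$-divisible element of $I^{(s)}$ cannot avoid $I^s$ when $s\le m$, so that $c_2$ is never peeled into a negative symbolic power — uses $c_2\in I^m$ and $c_1\in I^n$, which hold because the product of all vertices of an odd $(2\ell+1)$-cycle factors as $\ell$ edges times one leftover vertex.

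The main obstacle is the peeling identity $\min_V w_V(\mu)=\tau(C_i)+\min_V w_V(\mu/c_i)$. Its nontrivial direction is that the minimum of $w_V(\mu/c_1)$ over all vertex covers of $G$ is attained at a cover meeting $C_1$ in exactly $\tau(C_1)=n+1$ vertices, so that no excess weight is charged to the $c_1$ we divide out. This rests on the fact that an odd cycle admits a minimum-\emph{weight} vertex cover of minimum \emph{cardinality} even under the constraint that the glueing vertex $x_1$ be in (or out of) the cover: for nonnegative weights a maximum-weight independent set can always be enlarged to one of maximum size $n$, whose complement is the required $(n+1)$-vertex cover, and fixing whether $x_1\in V$ decouples the optimisation over $C_1$ and $C_2$. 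I expect the careful verification of this decoupling, together with the index-range bookkeeping in the $D_{\min}(m+1)$ reformulation, to be where the genuine work lies.
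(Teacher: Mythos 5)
Your reduction of $D_{\min}(m+1)$ to $c_2$ and the containment of the right-hand side in $I^{(s)}$ are fine, but the engine of your induction --- the peeling identity $\min_V w_V(\mu)=(n+1)+\min_V w_V(\mu/c_1)$ --- is false, and so is the independent-set claim you offer in its support. The trouble is that once a cycle has length at least $9$ it has minimal vertex covers of non-minimum cardinality, and these can charge more than $\tau(C_1)=n+1$ to the factor $c_1$. Concretely, take $C_1=(x_1,\dots,x_9)$ (so $n=4$) and $\mu=c_1\cdot x_3x_6x_9$. The set $\{x_3,x_6,x_9\}$ is a maximal independent set of $C_1$ avoiding $x_1$, so its complement extends to a minimal vertex cover $V_0$ of $G$ with $|V_0\cap V(C_1)|=6$ and $w_{V_0}(\mu/c_1)=0$; on the other hand no independent set of size $4$ in $C_9$ contains all of $x_3,x_6,x_9$, so every minimal vertex cover $V$ satisfies $w_V(\mu)=|V\cap V(C_1)|+|V\cap\{x_3,x_6,x_9\}|\geq 6$. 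Hence $\mu\in I^{(6)}$ while $\mu/c_1=x_3x_6x_9\notin I^{(1)}=I$: dividing by $c_1$ drops the symbolic order by $6$, not by $n+1=5$. The same example refutes your supporting claim, since the maximum-weight independent set $\{x_3,x_6,x_9\}$ is already maximal and cannot be enlarged to one of maximum cardinality $4$; fixing whether $x_1$ lies in the cover does not repair this, as $x_1$ plays no role in the example.

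In this instance $\mu$ happens to lie in $I^6$, so your algorithm would never actually peel it --- but that observation is exactly the missing content. What your induction needs is the weaker assertion that a minimal generator of $I^{(s)}$ that is \emph{not} in $I^s$ and is divisible by $c_1$ has quotient in $I^{(s-n-1)}$; you have neither stated nor proved this, and it does not follow from Lemmas \ref{lt} and \ref{dt} as cited. That is where the real difficulty of the theorem sits, and it is precisely what the paper sidesteps: there the result is read off from the symbolic Rees algebra, using that $G$ is implosive (Theorems \ref{pcycle} and \ref{pimplosive}), that $\mathcal{R}_s(I)$ is generated by the monomials $x^vt^{\tau(G^v)}$ attached to induced indecomposable subgraphs $G^v$ (Theorem \ref{1implosive}), and that by the Harary--Plummer classification these are only the edges and the two odd cycles, so that $\mathcal{R}_s(I)=k[It,\,c_1t^{n+1},\,c_2t^{m+1}]$ and the displayed sum is simply its degree-$s$ component. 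As written, your argument has a genuine gap at the peeling step.
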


\begin{proof} By Theorems \ref{pcycle} and \ref{pimplosive}, $G$ is an implosive graph in either case. Thus symbolic Rees algebra $\mathcal{R}_s(I)$ of $I$ is generated by the monomials of the form $x^vt^b,$ where $v\in \{0,1\}^{\mid V(G)\mid}$ and $G^v$ is induced indecomposable subgraph of $G.$ It can be seen from \cite[Corollary 2a]{Frank} that induced indecomposable subgraphs of $G$ is either an edge or an odd cycle.
\begin{enumerate}
\item
  Thus symbolic Rees algebra is minimally generated only in degrees 1 and $n+1$, so we have
  \begin{align*} I^{(s)} &= \displaystyle{\sum_{r+k(n+1)=s}{I^r{(I^{(n+1)})}^k}}\\ &=\displaystyle {\sum_{p+q=t=0}^{k}{I^{s-t(n+1)}
  (x_1\cdots x_{2n+1})^p(x_1y_2\cdots y_{2n+1})}^q}\text{ (By Corollary \ref{3samelength}).}
  \end{align*}

  \item
  Here symbolic Rees algebra is generated in degrees 1, $n+1$ and $m+1.$\\
 \begin{align*}
  I^{(s)} &= \displaystyle{\sum_{p+q(n+1)+r(m+1)=s}{I^p{(I^{(n+1)})}^q}{(I^{(m+1)})}^r}  \\
   &= \displaystyle{\sum_{\substack{t_1,t_2\\0\leq t_1\leq k_1~and \\0\leq t_2\leq k_2 ~and \\ s-t_1(n+1)-t_2(m+1)\geq 0}}{I^{s-t_1(n+1)-t_2(m+1)}(c_1)^{t_1}(D_{\min}(m+1))^{t_2}}} \text{ (By Theorem \ref{3differentlength}).}
\end{align*}

  \end{enumerate}
\end{proof}


\begin{proposition}\label{Dm}
Let $G$ be the clique sum of two odd cycles  $C_1=(x_1,\ldots,x_{2n+1})$ and $C_2=(x_1,y_2,\ldots,y_{2m+1})$ joined at a vertex $x_1$ with $n<m$.  Let $I=I(G)$ be the edge ideal of $G$ and $k= \Bigl\lfloor\dfrac{t}{n+1}\Bigr\rfloor .$ Then

\begin{equation*}
  \sdefect(I,t)\leq\begin{cases}
    \displaystyle{\sum_{i=1}^{k}{{t-i(n+1)+2n+2m+1}\choose{2n+2m+1}}}, & \text{for  $n+1\leq t\leq m $}.\\
    \displaystyle{\sum_{i=1}^{k}{{t-i(n+1)+2n+2m+1}\choose{2n+2m+1}}}+1, & \text{ for $t=m+1$}.
  \end{cases}
\end{equation*}

\end{proposition}
\begin{proof}
From Theorem \ref{3differentlength}, it follows that $I^{(t)}=I^t+(D_{\min}(t))$ for $n+1\leq t\leq m+1.$ Therefore to compute symbolic defect we need to count the minimal number of generators of $(D_{\min}(t)).$
 To find the cardinality of $D_{\min}(t),$ we need to find the number of solution of the equation $i+(n+1)s+(m+1)b=t$ for $i<t.$ For $b\geq 1,$ there is only one solution namely $(i,s,b)=(0,0,1)$ and it is for $t=m+1$.  So to find the other solutions we  assume $b=0$ and count the  number of solutions of the equation $i+(n+1)s=t$ with $i<t$. As $0\leq i< t,$ therefore $1\leq s\leq \Bigl\lfloor\dfrac{t}{n+1}\Bigr\rfloor.$ Thus the cardinality of the set $D_{\min}(t)$ is $\Bigl\lfloor\dfrac{t}{n+1}\Bigr\rfloor $ for $1\leq t\leq m$ and cardinality of $D_{\min}(m+1)$ is $\Bigl\lfloor\dfrac{t}{n+1}\Bigr\rfloor +1.$ The elements of $D_{\min}(t)$ are of the form $I^i(c_1)^s(c_2)^b.$ If $b=0$ then the elements $D_{\min}(t)$ are of the form $I^i(c_1)^s$ and for different $i,s$ there are elements be computed repeatedly. Therefore\\ $\sdefect(I,t)\leq \displaystyle{\sum_{i=1}^{k}{\mu(I^{t-i(n+1)})}}=\displaystyle{\sum_{i=1}^{k}{{t-i(n+1)+2n+2m+1}\choose{2n+2m+1}}}
 \text{ for  $n+1\leq t\leq m $ }$  and \\$\sdefect(I,t)\leq \displaystyle{\sum_{i=1}^{k}{\mu(I^{t-i(n+1)})}}+1 =\displaystyle{\sum_{i=1}^{k}{{t-i(n+1)+2n+2m+1}\choose{2n+2m+1}}}+1$ for $t=m+1.$
\end{proof}

 \section{Symbolic powers of edge ideals of complete graph}
 Throughout this section $G$ is a complete graph with $n$ vertices and $I=I(G)$ be the edge ideal of $G$. In this section we describe the generators of the symbolic powers of $I$ and calculate Waldschmidt constant, the resurgence and find the symbolic defect partially. We prove Minh's conjecture by showing that the regularity of symbolic powers and ordinary powers of $I$ are equal. We know that $I^{(t)}=(L(t))+(D(t))$. In \cite[Theorem 6.4]{janssen2017comparing} it is known that for complete graph $(L(t))=I^t$. Thus it is enough to understand the generators of $(D(t))$. The following lemma describes the generators of $(D(t))$.


 \begin{lem} Let $G$ be a complete graph with $n$ vertices and $I=I(G)$ is the edge ideal. Then for $t\geq 2$ we have

\begin{equation*}
\begin{split}
D(t) = \{x_1^{a_1}x_2^{a_2}\cdots x_n^{a_n}& \mid a_{i_1}+a_{i_2}+\cdots+a_{i_{n-1}}\geq t \mbox{ for } \{i_1,i_2,\ldots,i_{n-1}\}\subseteq\{1,2,\ldots,n\} \\
           &    \mbox{ and } a_1+\cdots +a_n\leq 2t-1\}
\end{split}
\end{equation*}
\end{lem}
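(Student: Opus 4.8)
The plan is to reduce membership in $D(t)$ directly to the two displayed inequalities, the only real content being the identification of the minimal vertex covers of the complete graph; everything else is a formal rewriting of the definition of $D(t)$. First I would determine the minimal vertex covers of $K_n$. Because every pair of distinct vertices $x_i,x_j$ spans an edge, any set of vertices that omits both $x_i$ and $x_j$ fails to cover the edge $x_ix_j$; hence a vertex cover can omit at most one vertex, and such a set of size $n-1$ is always a cover. Consequently the minimal vertex covers of $G$ are exactly the $n$ sets $V_j:=V(G)\setminus\{x_j\}$, one for each $j\in\{1,\dots,n\}$, equivalently the $(n-1)$-element subsets $\{x_{i_1},\dots,x_{i_{n-1}}\}\subseteq V(G)$.

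Next I would translate the weight condition appearing in the definition of $D(t)$. For a monomial $x^{\underline{a}}=x_1^{a_1}\cdots x_n^{a_n}$ and a minimal vertex cover $V'=\{x_{i_1},\dots,x_{i_{n-1}}\}$, the definition of vertex weight gives $w_{V'}(x^{\underline{a}})=a_{i_1}+\cdots+a_{i_{n-1}}$. Thus, as $V'$ ranges over all minimal vertex covers, the family of requirements $w_{V'}(x^{\underline{a}})\geq t$ is precisely the family of inequalities $a_{i_1}+\cdots+a_{i_{n-1}}\geq t$ taken over all $(n-1)$-subsets $\{i_1,\dots,i_{n-1}\}\subseteq\{1,\dots,n\}$. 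By Lemma \ref{psymbolic} this family of inequalities is exactly the condition $x^{\underline{a}}\in I^{(t)}$, which anchors the weight reformulation inside the symbolic power, although for the set equality claimed here I only need the reformulation itself.

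Finally I would unwind the definition of $D(t)$, which consists of those $x^{\underline{a}}$ with $\deg(x^{\underline{a}})<2t$ satisfying $w_{V'}(x^{\underline{a}})\geq t$ for every minimal vertex cover $V'$. Since the exponents $a_i$ are nonnegative integers, the condition $\deg(x^{\underline{a}})=a_1+\cdots+a_n<2t$ is equivalent to $a_1+\cdots+a_n\leq 2t-1$. Combining this degree bound with the weight reformulation from the previous paragraph yields exactly the set displayed in the statement, uniformly in $t$ (the hypothesis $t\geq 2$ only discards the degenerate small cases and plays no essential role). I expect no genuine obstacle in this argument: the single point requiring elementary care is the verification that omitting one vertex always produces a cover while omitting two never does, so that the minimal vertex covers are precisely the $(n-1)$-subsets and the weight inequalities are indexed exactly as in the statement.
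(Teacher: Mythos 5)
Your proposal is correct and follows essentially the same route as the paper: identify the minimal vertex covers of $K_n$ as the $(n-1)$-element subsets of the vertex set, translate $w_{V'}(x^{\underline{a}})\geq t$ into the sums $a_{i_1}+\cdots+a_{i_{n-1}}\geq t$, and note that $\deg(x^{\underline{a}})<2t$ is equivalent to $a_1+\cdots+a_n\leq 2t-1$ for integer exponents. Your explicit check that omitting two vertices fails to cover (so the covers of size $n-1$ are indeed all of the minimal ones) is a small point the paper asserts without proof, but otherwise the arguments coincide.
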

\begin{proof}
  Since $G$ is a complete graph in $n$ vertices, any set of $(n-1)$ vertices forms a minimal vertex cover for $G.$ Let $x^{\underline{a}}={x_1^{a_1}x_2^{a_2}\cdots x_n^{a_n}\in D(t) }$ then $a_1+\dots +a_n\leq 2t-1$ and since $x^{\underline{a}}\in I^{(t)}$ for any minimal vertex cover $V,$ $W_V(x^{\underline{a}})\geq t$ which implies that  $a_{i_1}+a_{i_2}+\dots+a_{i_{n-1}}\geq t \mbox{ for } \{i_1,i_2,\dots,i_{n-1}\}\subseteq\{1,2,\dots,n\}$. Hence the result follows.
\end{proof}

Next theorem describes the generators of $\mathcal{R}_s(I)$. Since complete graph is perfect graph thus by \cite{flores}, $G$ is implosive graph. 
Hence by Theorem \ref{1implosive}, we need to  find all the induced indecomposable subgraph of $G$.

\begin{thm}\label{4complete}
For $1\leq s\leq n-1$ we get $I^{(s)}= I^s+(D(s)).$ And for any $s\geq n$ we have
$$I^{(s)}=\sum_{\substack{(r_1,\dots,r_{n-1})\\ s=r_1+2r_2+\cdots+ (n-1)r_{n-1}}}{I^{r_1}{{I^{(2)}}^{r_2}}\cdots{{I^{(n-1)}}^{r_{n-1}}}}$$
\end{thm}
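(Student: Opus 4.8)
The plan is to prove this theorem in two parts corresponding to the two ranges of $s$, using Theorem \ref{1implosive} to reduce everything to understanding the generators of the symbolic Rees algebra $\mathcal{R}_s(I)$. As noted in the sentence preceding the statement, for a complete graph we know $(L(s))=I^s$ by \cite[Theorem 6.4]{janssen2017comparing}, and the preceding lemma describes $(D(s))$; since $I^{(s)}=(L(s))+(D(s))$, the decomposition $I^{(s)}=I^s+(D(s))$ is immediate for \emph{every} $s$, which in particular settles the first assertion for $1\leq s\leq n-1$. Thus the real content is the second formula, expressing $I^{(s)}$ for $s\geq n$ as a sum of products of the lower symbolic powers $I^{(1)},\dots,I^{(n-1)}$.

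For the second part, the key is the observation already set up in the excerpt: since a complete graph is perfect, it is implosive by \cite{flores}, so by Theorem \ref{1implosive} the symbolic Rees algebra $\mathcal{R}_s(I)$ is generated by monomials $x^v t^b$ with $v\in\{0,1\}^n$ and $G^v$ an induced indecomposable subgraph with $b=\tau(G^v)$. First I would determine exactly which induced subgraphs of $K_n$ are indecomposable. Any induced subgraph of $K_n$ on $k$ vertices is itself the complete graph $K_k$, and I would argue that $K_k$ is indecomposable for $2\leq k\leq n$ while being careful that the relevant generators occur precisely in degrees $b=\tau(K_k)=k-1$ for $k=2,\dots,n$, i.e. in degrees $1,2,\dots,n-1$. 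This identifies the generating degrees of the symbolic Rees algebra as exactly $1$ through $n-1$, with the degree-$j$ generators corresponding to $I^{(j)}$.

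Once the generating degrees are pinned down, the graded structure of $\mathcal{R}_s(I)=\bigoplus_s I^{(s)}t^s$ gives the formula directly: every element of the degree-$s$ piece, for $s\geq n$, is a product of the algebra generators, whose degrees are drawn from $\{1,2,\dots,n-1\}$. Writing a partition of $s$ as $s=r_1+2r_2+\cdots+(n-1)r_{n-1}$, where $r_j$ counts how many generators of degree $j$ are used, yields
$$I^{(s)}=\sum_{\substack{(r_1,\dots,r_{n-1})\ \text{and}\\ s=r_1+2r_2+\cdots+(n-1)r_{n-1}}}{I^{r_1}{{I^{(2)}}^{r_2}}\cdots{{I^{(n-1)}}^{r_{n-1}}}},$$
where I use $I^{(1)}=I$. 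The containment $\supseteq$ is clear since each summand consists of products of symbolic powers and hence lies in $I^{(s)}$; the containment $\subseteq$ is exactly the statement that $\mathcal{R}_s(I)$ is generated in degrees $1,\dots,n-1$.

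The main obstacle I expect is the indecomposability analysis: one must verify that $K_k$ (for $k\leq n$) is genuinely indecomposable, i.e. that $\tau(K_k)=k-1$ cannot be split as a sum $\sum_i\tau(K_k[V_i])$ over a proper partition of its vertices. Here I would invoke the cited classification \cite[Corollary 2a]{Frank} used earlier in the paper, which says the induced indecomposable subgraphs are edges or odd cycles in the cycle setting; for the complete graph I would instead appeal directly to \cite{flores} together with the fact that the only indecomposable complete graphs relevant here give generators in the claimed degrees. The delicate point is making sure no generator of degree $\geq n$ is needed, since that is precisely what allows the sum in the second formula to range only over $r_1,\dots,r_{n-1}$; this follows because the largest induced subgraph $K_n$ contributes a generator in degree $\tau(K_n)=n-1$, and no induced subgraph of $K_n$ has more than $n$ vertices.
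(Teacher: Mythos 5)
Your proposal is correct and follows essentially the same route as the paper: both reduce the claim to the fact that $K_n$ is implosive and that its induced indecomposable subgraphs are the complete graphs $K_k$ with $\tau(K_k)=k-1$, so that $\mathcal{R}_s(I)$ is generated in degrees $1,\dots,n-1$, from which both assertions follow. Your write-up is in fact more careful than the paper's own very terse proof (e.g.\ in noting that $(L(s))=I^s$ handles the first part and in flagging the indecomposability check, which is a one-line computation since a proper partition into parts of sizes $k_1,\dots,k_r$ gives $\sum_i(k_i-1)=k-r<k-1$ for $r\geq 2$).
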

\begin{proof}
   Here all induced subgraphs are induced indecomposable subgraphs. Therefore symbolic Rees algebra is minimally generated in degrees $1,2,\ldots,n-1$. Thus it is enough to find $D(s)$ for $2\leq s\leq n-1$. For $1\leq s\leq n-1$ we get $I^{(s)}= I^s+(D(s)),$ and if $s\geq n$ then $I^{(s)}$ is generated by $I,I^{(2)},\dots,I^{(n-1)}.$ Hence the result follows.
\end{proof}

Next we compute the Waldschmidt constant, resurgence  and symbolic defect for $G$.
\begin{thm}\label{4completeinvariant}
  Let $G$ be the complete graph on the vertices $\{x_1,\ldots,x_{n}\}$ and $I=I(G)$ be its edge ideal. Then
  \begin{enumerate}
    \item For any $ s\in \mathbb{N}$, $ \alpha{(I^{(s)})}$
     = $ s+ \Bigl\lceil\dfrac{s}{n-1}\Bigr\rceil.\qquad$\\
     Particularly, the Waldschmidt constant of $I$ is given by \begin{align*}\widehat{\alpha}{(I)} =\frac{n}{n-1}\end{align*}
    \item  $\alpha{(I^{(s)})} < \alpha{(I^t)}~~ if~~ and~~ only ~~if~~ I^{(s)}\nsubseteq I^t$
    \item  The resurgence of $I$ is given by $\rho(I) = \frac{2n-2}{n}$
  \end{enumerate}
\end{thm}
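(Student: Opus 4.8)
The plan is to handle the three parts in turn, the common engine being the vertex--cover criterion of Lemma~\ref{psymbolic}. Since $G=K_n$, the minimal vertex covers are exactly the $(n-1)$-subsets of the vertex set, so for a monomial $x^{\underline a}$ with $d=\sum_i a_i$ and $M=\max_i a_i$, deleting the vertex $x_j$ produces a cover of weight $d-a_j$. Thus $x^{\underline a}\in I^{(s)}$ if and only if $d-a_j\ge s$ for every $j$, equivalently $d-M\ge s$.

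For part (1), I would read off $\alpha(I^{(s)})$ as the least degree $d$ of a monomial with $d-M\ge s$. Minimising $d$ means forcing the exponents to sum to $d$ while each stays $\le d-s$, which is possible exactly when $n(d-s)\ge d$, i.e. $d\ge \tfrac{ns}{n-1}$. Hence $\alpha(I^{(s)})=\bigl\lceil\tfrac{ns}{n-1}\bigr\rceil=s+\bigl\lceil\tfrac{s}{n-1}\bigr\rceil$ (using that $s$ is an integer), and since a least-degree monomial of a monomial ideal is automatically a minimal generator, this is indeed the least generating degree. Dividing by $s$ and letting $s\to\infty$ yields $\widehat{\alpha}(I)=1+\tfrac{1}{n-1}=\tfrac{n}{n-1}$.

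For part (2), I first note $\alpha(I^t)=2t$ because $I$ is generated in degree $2$. One implication is immediate: if $\alpha(I^{(s)})<2t$ then $I^{(s)}$ contains a monomial of degree $<2t$, which cannot lie in $I^t$, so $I^{(s)}\nsubseteq I^t$. For the converse I would prove the contrapositive, that $\alpha(I^{(s)})\ge 2t$ forces $I^{(s)}\subseteq I^t$. The key is a membership test for $K_n$: a monomial $x^{\underline a}$ lies in $I^t$ iff some product of $t$ edge-generators divides it, i.e. iff there are degrees $0\le d_i\le a_i$ with $\sum_i d_i=2t$ realisable by a loopless multigraph on $t$ edges; by the classical fact that such a sequence is realisable precisely when its maximum entry is $\le t$, this is equivalent to $\sum_i\min(a_i,t)\ge 2t$. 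Now take $x^{\underline a}\in I^{(s)}$ with $d=\deg x^{\underline a}\ge\alpha(I^{(s)})\ge 2t$. Because $\alpha(I^{(s)})=s+\lceil s/(n-1)\rceil\le 2s$, the hypothesis gives $s\ge t$. If $M\le t$ then $\sum_i\min(a_i,t)=d\ge 2t$; if $M>t$ then the largest coordinate contributes $t$, while the truncated-sum bound $\sum\min(a_i,t)\ge\min(\sum a_i,t)$ applied to the remaining coordinates gives a contribution of at least $\min(d-M,t)=t$ (as $d-M\ge s\ge t$), again totalling $\ge 2t$. Thus $x^{\underline a}\in I^t$. I expect this membership criterion and its short case split to be the main obstacle, as it is the only point where the combinatorics of $I^t$ (rather than $I^{(s)}$) is used.

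For part (3), parts (1) and (2) reduce $\rho(I)$ to a numerical supremum: $\rho(I)=\sup\{\,s/t : s+\lceil s/(n-1)\rceil<2t\,\}$. From $s+\lceil s/(n-1)\rceil\ge \tfrac{ns}{n-1}$, every admissible pair satisfies $s/t<\tfrac{2(n-1)}{n}$, giving $\rho(I)\le\tfrac{2n-2}{n}$. For the reverse inequality I would exhibit a sequence attaining the bound in the limit, taking $s=2(n-1)j$ and $t=nj+1$: then $\alpha(I^{(s)})=2nj<2t$, so $I^{(s)}\nsubseteq I^t$ by part (2), while $s/t=\tfrac{2(n-1)j}{nj+1}\to\tfrac{2n-2}{n}$ as $j\to\infty$. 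Combining the two bounds gives $\rho(I)=\tfrac{2n-2}{n}$.
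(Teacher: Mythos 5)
Your proposal is correct, and it reaches all three conclusions by a genuinely different and more self-contained route than the paper. For part (1) the paper first computes $\alpha(I^{(s)})=s+1$ for $s\le n-1$, then invokes the symbolic Rees algebra decomposition (Theorem \ref{4complete}, generation in degrees $1,\dots,n-1$) and minimises $2r_1+3r_2+\cdots+nr_{n-1}$ over representations $s=r_1+2r_2+\cdots+(n-1)r_{n-1}$; you instead read the answer directly off the membership criterion $\deg(x^{\underline a})-\max_i a_i\ge s$ coming from the $(n-1)$-element vertex covers, which is shorter and avoids the Rees-algebra machinery entirely. For part (2) the paper simply cites \cite[Lemma 5.5]{janssen2017comparing}, whereas you supply a full proof: the nontrivial direction rests on your criterion $x^{\underline a}\in I^t\iff\sum_i\min(a_i,t)\ge 2t$, which in turn uses the classical fact that a sequence of even sum $2t$ is the degree sequence of a loopless multigraph with $t$ edges iff its maximum entry is at most $t$ --- this fact is standard but is the one ingredient you should either prove or reference explicitly, since the whole converse hangs on it; your subsequent case split on $\max_i a_i\lessgtr t$ is correct. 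For part (3) the upper bound $\rho(I)\le\frac{2(n-1)}{n}$ is obtained exactly as in the paper, but for the lower bound the paper appeals to the general inequality $\alpha(I)/\widehat{\alpha}(I)\le\rho(I)$ of Guardo--Harbourne--Van Tuyl, while you exhibit the explicit admissible pairs $s=2(n-1)j$, $t=nj+1$ with $s/t\to\frac{2(n-1)}{n}$; both are valid, yours being elementary and the paper's requiring one fewer computation. In short, your argument trades the paper's external citations and Rees-algebra input for direct monomial combinatorics, at the cost of needing the multigraph realizability lemma.
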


\begin{proof}
\begin{enumerate}
  \item Since $G$ is a complete graph, any minimal vertex cover of $G$ will be of the form $x_{i_1,}x_{i_2},\ldots ,x_{i_{n-1}} $ where $ \{i_1,i_2,\dots,i_{n-1}\}\subseteq\{1,2,\dots,n\}$. Note that for  $1\leq s\leq n-1,$ no monomial of degree $\leq s$ is in $I^{(s)},$ where as  $x_1x_2\dots x_{s+1}\in I^{(s)}.$ Therefore $\alpha(I^{(s)})=s+1$ for $1\leq s\leq n-1.$ From Theorem \ref{4complete}, it follows that for $s\geq n,$  $$\alpha(I^{(s)})=\min\{2r_1+3r_2+\dots+nr_{n-1}\mid s=r_1+2r_2+\dots+(n-1)r_{n-1}\}.$$ Now $2r_1+3r_2+\dots+nr_{n-1}=s+r_1+r_2+\dots+r_{n-1}.$ Then it is equivalent to find the minimum of ${r=r_1+r_2\dots+r_{n-1}}$ with the condition $s=r_1+2r_2+\dots+(n-1)r_{n-1}.$ Write $s=k(n-1)+p$ for some $k\in \mathbb{Z}$ and $0\leq p\leq n-2.$ Then observe that minimum value of $r$ will occur for maximum value of $r_{n-1}$ and the maximum value of $r_{n-1}$ is $k.$ Therefore the minimal generating degree term will come from $I^{(n-1)^k}I^{(p)}.$ Hence
\begin{align*}
                                    \alpha(I^{(s)}) & =  \left\{\begin{array}{ll}
                                     kn+(p+1)& \text{ if $p \neq 0$} \\
                                       kn & \text{  if $p=0$ }\\
                                        \end{array} \right.\\
                                         &= \left\{\begin{array}{ll}
                                        s+1+ \Bigl\lfloor\dfrac{s}{n-1}\Bigr\rfloor & \text{ if $p \neq 0$} \\
                                         & \\
                                         s+ \Bigl\lfloor\dfrac{s}{n-1}\Bigr\rfloor & \text{  if $p=0$ .}\\
                                         \end{array} \right.
              \end{align*}

     Therefore  $ \alpha{(I^{(s)})}$
     =$ s+ \Bigl\lceil\dfrac{s}{n-1}\Bigr\rceil.$
     Moreover $\frac{s}{n-1}\leq\Bigl\lceil\dfrac{s}{n-1}\Bigr\rceil\leq \frac{s}{n-1}+1.$ Thus \begin{align*}\widehat{\alpha}{(I)}=\lim_{s\rightarrow\infty}{} \frac{\alpha{(I^{(s)})}}{s} = 1+ \frac{1}{n-1}=\frac{n}{n-1}.\end{align*}

  \item Follows from \cite[Lemma 5.5]{janssen2017comparing}.
  \item Let $T =\tiny\{ \frac{s}{t} ~~|~~I^{(s)}\nsubseteq I^t \tiny\}. $
  For any $\frac{s}{t}\in T$ we have $\alpha(I^{(s)})<\alpha(I^t).$ By part (1) it follows that $ s+ \Bigl\lceil\dfrac{s}{n-1}\Bigr\rceil< 2t.$ This implies that $\frac{s}{t}<\frac{2(n-1)}{n}.$ So $\rho(I)\leq \frac{2(n-1)}{n}.$ By \cite[Theorem 1.2]{guardo2013}, we have ${\alpha(I)}/{\hat{\alpha}(I)}\leq \rho(I).$ Thus by part (1), this gives that $\frac{2(n-1)}{n}\leq \rho(I)$. Hence $\rho(I)=\frac{2(n-1)}{n}$.
\end{enumerate}

\end{proof}

\begin{thm}
Let $G$ be a complete graph with $n$ vertices and $I=I(G)$ be the edge ideal of $G$. Then for  $2\leq s\leq n-1$ we have $$\sdefect(I,s)={n \choose s+1}.$$

\end{thm}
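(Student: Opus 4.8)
The plan is to compute $\sdefect(I,s) = \mu(I^{(s)}/I^s)$ for the complete graph by counting the minimal generators of $(D(s))$ that survive modulo $I^s$. Since we are in the range $2 \le s \le n-1$, Theorem \ref{4complete} gives $I^{(s)} = I^s + (D(s))$, and since $(L(s)) = I^s$ for the complete graph, every generator contributing to the symbolic defect must come from $D(s)$. So the first step is to identify which monomials $x^{\underline{a}}$ lie in $D(s)$ but not in $I^s$, and then to extract a \emph{minimal} such set.

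Next I would analyze the defining inequalities of $D(s)$ from the preceding lemma: $a_{i_1} + \cdots + a_{i_{n-1}} \ge s$ for every $(n-1)$-subset of the variables, together with $a_1 + \cdots + a_n \le 2s-1$. The first family of inequalities says that dropping any single variable still leaves total degree at least $s$; equivalently, writing $d = a_1 + \cdots + a_n$ for the total degree and $a_{\max}$ for the largest exponent, the condition is $d - a_{\max} \ge s$, i.e. no single variable carries more than $d - s$ of the degree. Combined with $d \le 2s-1$, the most economical (lowest-degree, hence minimal-generator) candidates should be the squarefree monomials of degree exactly $s+1$: for a squarefree monomial $x_{i_1}\cdots x_{i_{s+1}}$ of degree $s+1$, every exponent is $1$, so dropping one variable leaves degree $s \ge s$, and the total degree $s+1 \le 2s-1$ holds precisely when $s \ge 2$. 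The claim I expect to verify is that these squarefree degree-$(s+1)$ monomials are exactly the minimal generators of $(D(s))$ modulo $I^s$, and there are $\binom{n}{s+1}$ of them, matching the asserted formula.

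The two things I would check carefully are that each such monomial indeed fails to lie in $I^s$, and that no monomial of strictly smaller degree can appear as a generator. For the first point, a squarefree monomial of degree $s+1$ cannot be a product of $s$ edges (each edge has degree $2$, so any element of $I^s$ has degree at least $2s > s+1$ for $s \ge 2$), so $x^{\underline{a}} \notin I^s$. For the second point, the smallest possible degree of any monomial in $I^{(s)}$ is $\alpha(I^{(s)}) = s + \lceil s/(n-1)\rceil = s+1$ in this range (by Theorem \ref{4completeinvariant}(1), since $1 \le s \le n-1$ forces the ceiling to equal $1$), so no generator of degree below $s+1$ exists, and at degree $s+1$ the monomials in $I^{(s)}$ are forced to be squarefree by the weight conditions. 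Any non-squarefree monomial or any monomial of higher degree that lies in $D(s)$ would then be divisible by one of these degree-$(s+1)$ squarefree generators or would lie in $I^s$, so it is redundant.

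The main obstacle I anticipate is the minimality/redundancy bookkeeping: one must confirm that \emph{every} generator of $(D(s))$ listed in the lemma is either divisible by one of the squarefree degree-$(s+1)$ monomials or already belongs to $I^s$, so that the minimal generating set of $I^{(s)}/I^s$ is exactly these $\binom{n}{s+1}$ squarefree monomials and nothing more. This requires showing that a monomial $x^{\underline{a}} \in D(s)$ with $a_{\max} \ge 2$, or with total degree $> s+1$, is never a \emph{new} minimal generator. The cleanest route is to argue that the image of $(D(s))$ in $R/I^s$ is minimally generated in a single degree: since $\alpha(I^{(s)}) = s+1$ and the socle-type weight constraints pin the degree-$(s+1)$ part to be the squarefree monomials $x_{i_1}\cdots x_{i_{s+1}}$, counting these gives $\binom{n}{s+1}$, completing the proof.
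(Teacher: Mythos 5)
Your approach mirrors the paper's own proof: reduce to counting minimal generators of $(D(s))$ modulo $I^s$, use $\alpha(I^{(s)})=s+1$ to pin the degree-$(s+1)$ part of $D(s)$ to the ${n\choose s+1}$ squarefree monomials, and then dismiss everything else in $D(s)$ as redundant. The difficulty you flag at the end --- that every element of $D(s)$ of degree larger than $s+1$ is either divisible by one of the squarefree degree-$(s+1)$ generators or lies in $I^s$ --- is not a bookkeeping issue that a cleaner argument will absorb: the claim is false, and the paper asserts it without justification as well. Take $n=4$, $s=3$ and $m=x_1^2x_2^2x_3$. Every sum of exponents over an $(n-1)$-subset is at least $3$, so $m\in I^{(3)}$, and $\deg m=5\leq 2\cdot 3-1$, so $m\in D(3)$. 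But $m\notin I^3$ (elements of $I^3$ have degree at least $6$), $m$ is not divisible by the unique squarefree degree-$4$ monomial $x_1x_2x_3x_4$ (its support has only three variables), and none of $m/x_1$, $m/x_2$, $m/x_3$ lies in $I^{(3)}$, so $m\notin \mathfrak{m}I^{(3)}+I^3$ and $m$ is a genuinely new minimal generator of $I^{(3)}/I^3$. By symmetry all twelve monomials $x_i^2x_j^2x_k$ behave the same way, so $\sdefect(I(K_4),3)\geq 13>{4\choose 4}=1$.

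The structural reason the reduction to degree $s+1$ fails is that the weight conditions only say $a_i\leq d-s$ for every $i$, where $d$ is the total degree; this forces the support to have at least $\lceil d/(d-s)\rceil$ elements, and that bound drops below $s+1$ as soon as $d>s+1$. Such monomials cannot be divisible by any squarefree monomial of degree $s+1$, yet (as the example shows) they need not lie in $I^s$ either. Your argument is sound for $s=2$, where the constraint $d\leq 2s-1=3=s+1$ forces every element of $D(2)$ into the single degree $s+1$, but for $3\leq s\leq n-1$ the set $D(s)$ has minimal generators of small support in every degree between $s+1$ and $2s-1$, and these must be accounted for. Completing the proof would require a genuinely different analysis of the minimal generators of $(D(s))$ in all of these degrees, not just the initial one; as stated, the counting step does not go through.
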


\begin{proof}
 Since $G$ is a complete graph we have $I^{(s)}=I^s+(D(s))$. Therefore in order to compute $\sdefect(I,s),$ we count the number of minimal generators of the ideal $(D(s)).$ Then the following set give the minimal generators of the ideal $(D(s)),$
\begin{equation*}
\begin{split}
D_{\min}(s) = \{x_1^{a_1}x_2^{a_2}\cdots x_n^{a_n}& \mid a_{i_1}+a_{i_2}+\dots+a_{i_{n-1}}\geq s \mbox{ for } \{i_1,i_2,\dots,i_{n-1}\}\subseteq\{1,2,\dots,n\} \\
           &    \mbox{ and } a_1+\dots +a_n= \alpha(I^{(s)})\}.
\end{split}
\end{equation*}
Any other elements in $D(s)$ with condition $a_1+\dots+a_n>\alpha(I^{(s)})$ will be generated by the elements of $D_{\min}(s).$
 From Theorem \ref{4completeinvariant}, it follows that for $1\leq s\leq n-1,$ $\alpha(I^{(s)})=s+1.$ Therefore $a_i\in\{0,1\}.$ Then it is clear that the monomial $x_1^{a_1}x_2^{a_2}\cdots x_n^{a_n}$ will be an element of $D_{\min}(s)$ if and only if $(s+1)$ co-ordinates in that tuple will be $1$ and rest will be $0$. Hence number of solutions will be $n\choose {s+1}$.

\end{proof}

Now we show that Minh's conjecture is true for complete graphs. In order to prove this we need the following lemma.

\begin{lem}\label{artinian}
  Let $G$ be a complete graph with $n$ vertices and $I=I(G)$ is the corresponding edge ideal in the polynomial ring $R=k[x_1,\ldots,x_n]$. Let $\m=(x_1,\ldots,x_n)$ is the maximal homogeneous  ideal of $R$, then $\m^{t-1}I^{(t)}\subseteq I^t.$
\end{lem}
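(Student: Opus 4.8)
The plan is to prove the containment $\m^{t-1}I^{(t)}\subseteq I^t$ by working with individual monomial generators, since both sides are monomial ideals. Let $x^{\underline a}=x_1^{a_1}\cdots x_n^{a_n}$ be a monomial generator of $I^{(t)}$; by Lemma~\ref{psymbolic} applied to the complete graph $G$ (whose minimal vertex covers are exactly the $(n-1)$-element subsets of the vertex set), this means $a_{i_1}+\dots+a_{i_{n-1}}\geq t$ for every choice of $n-1$ indices, equivalently $\sum_{i=1}^n a_i - a_j \geq t$ for each $j$. A monomial $\m^{t-1}x^{\underline a}$ is, up to the monomial generators of $\m^{t-1}$, of the form $x^{\underline a}\cdot x^{\underline c}$ with $|\underline c|=t-1$. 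It suffices to show every such product lies in $I^t$.

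First I would reduce to understanding when a monomial of $R$ lies in $I^t=I(G)^t$. Since $G=K_n$, the power $I^t$ is generated by products of $t$ edges, so a monomial $x^{\underline b}$ lies in $I^t$ precisely when it admits a factorization into $t$ edge-factors times something, i.e.\ when it can be written with $b(x^{\underline b})\geq t$ in the optimal-form language of the earlier section. The clean combinatorial criterion I would use is: for the complete graph, a monomial $x^{\underline b}$ with $\deg(x^{\underline b})=\sum b_i$ belongs to $I^t$ if and only if $\sum_i b_i \geq 2t$ and $\max_j b_j \leq \sum_i b_i - t$ (the latter guaranteeing no single variable is "too heavy" to be paired off into $t$ disjoint edges). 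This is the natural matching-theoretic condition that $t$ disjoint edges can be extracted, and it is where most of the work sits.

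The key computation is then to verify both inequalities for $x^{\underline a}\cdot x^{\underline c}$. Set $\underline b=\underline a+\underline c$. For the degree: since $x^{\underline a}\in I^{(t)}$ we have $\sum a_i \geq t+\max_j a_j \geq t+1$ whenever $t\geq 1$, and more usefully the covering condition $\sum a_i - a_j\geq t$ for all $j$ gives, on summing or on choosing $j$ to be a maximal coordinate, a lower bound $\sum a_i \geq t + a_{\max}$. Combined with $|\underline c|=t-1$ this yields $\sum b_i = \sum a_i + (t-1) \geq t + a_{\max} + t - 1 \geq 2t$, so the degree condition holds. For the max condition I would bound $b_j = a_j + c_j$; the heaviest variable in $\underline b$ must still satisfy $b_j \leq \sum_i b_i - t$, and I would derive this from the symbolic condition $\sum_{i\neq j} a_i \geq t$ rewritten as $\sum_i a_i - a_j \geq t$, adding the contribution of $\underline c$ carefully so that the extra $t-1$ degree from $\m^{t-1}$ is exactly what compensates for any deficiency.

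The main obstacle I anticipate is making the matching criterion for membership in $I^t=I(K_n)^t$ precise and justifying it rigorously: one must show that the numerical conditions (total degree at least $2t$ and no coordinate exceeding the total minus $t$) genuinely force the existence of $t$ pairwise edge-factors inside a monomial, which is a Hall-type / defect-version-of-matching argument on a multiset of vertices. The inequality bookkeeping tying the $t-1$ factors from $\m^{t-1}$ to the symbolic covering inequalities is routine once that criterion is in hand; the real care is in choosing, for the worst case where one variable $x_j$ dominates, how the $t-1$ extra variables from $\m^{t-1}$ are distributed so that the dominance is broken and a full set of $t$ disjoint edges can be formed. I would handle that worst case first and then argue the general case reduces to it.
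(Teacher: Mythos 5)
Your proposal is correct, and the inequality bookkeeping at its core is complete and checks out: from the symbolic condition $\sum_i a_i-a_j\geq t$ for all $j$ you get $\sum_i a_i\geq t+a_{\max}\geq t+1$, hence $\sum_i b_i\geq 2t$, and $b_j=a_j+c_j\leq(\sum_i a_i-t)+(t-1)=\sum_i b_i-t$ for every $j$, so your two numerical conditions hold for the product. The route, however, differs from the paper's. You reduce everything to an explicit membership criterion for $I(K_n)^t$ --- a monomial $x^{\underline b}$ lies in $I^t$ iff $\sum_i b_i\geq 2t$ and $\max_j b_j\leq\sum_i b_i-t$ --- which is the statement that the maximum number of disjoint loopless pairs extractable from multiplicities $b_1,\dots,b_n$ is $\min\bigl(\lfloor\sum_i b_i/2\rfloor,\ \sum_i b_i-\max_j b_j\bigr)$. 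The paper instead invokes its ``optimal form'' machinery: for a complete graph an optimal factorization has at most one ancillary, and the proof splits into cases according to whether that ancillary has degree $\geq 2$ (in which case every edge in the factorization contains the ancillary vertex and $b(m)$ equals the total degree minus the ancillary vertex's degree) or degree $\leq 1$ (in which case $b(m)=\lfloor\deg m/2\rfloor$ and one argues $\deg(x^{\underline a})\geq t+1$). These are really the same combinatorial fact in two guises --- the paper's case analysis is precisely a proof of your $\min$ formula --- so your approach buys a cleaner, purely numerical verification with no case split, at the price of having to prove the matching criterion, which you correctly flag as the main outstanding obligation but do not carry out. That criterion is true and its proof is the standard greedy argument (repeatedly pair the two largest remaining multiplicities); alternatively you could simply cite the paper's one-ancillary observation for complete graphs, which delivers the same formula for $b(m)$. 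With that lemma supplied, your argument is a valid and arguably tidier proof of the containment.
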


\begin{proof}

  Let $x^{\underline{a}}=x_1^{a_1}x_2^{a_2}\cdots x_n^{a_n}\in I^{(t)}$ and $x_1^{b_1}x_2^{b_2}\cdots x_n^{b_n}\in \m^{t-1}$, which implies $a_{i_1}+a_{i_2}+\dots+a_{i_{n-1}}\geq t \mbox{ for } \{i_1,i_2,\dots,i_{n-1}\}\subseteq\{1,2,\dots,n\}$ and $b_1+\dots+b_n\geq t-1.$ Then any monomial in $\m^{t-1}I^{(t)}$ will be of the form $x_1^{a_1+b_1}x_2^{a_2+b_2}\cdots x_n^{a_n+b_n}=x^{\underline a+\underline b}.$ Since $G$ is a complete graph, if any monomial  is written in the optimal form then there will be  at most one ancillary. Let $x_1$ is the ancillary with degree at least $2$. Let $e_{ij}$ denote the edge between $x_i$ and $x_j.$ Then in the optimal form only the edges of the form $e_{1i}$ will be present. Thus  $x^{\underline a+\underline b}= x_1^ce_{12}^{a_2+b_2}e_{13}^{a_3+b_3}\cdots e_{1n}^{a_n+b_n}$ where $c\geq 2$. Now $b(x_1^ce_{12}^{a_2+b_2}e_{13}^{a_3+b_3}\cdots e_{1n}^{a_n+b_n})= a_2+\dots+ a_n+b_2+\dots+b_n \geq t$, which implies $x^{\underline a+\underline b}\in I^t$. Hence $\m^{t-1}I^{(t)}\subseteq I^t.$
  Now  consider the case when $x_1$ is an ancillary  of degree at most $1. $
  We would like to prove that $\deg(x^{\underline{a}})\geq t+1.$
Let us assume that $\deg{(x^{\underline{a}})}< t+1.$ So $\deg(x^{\underline{a}})\leq t,$ but we also have that $a_1+\dots+a_{n-1}\geq t$ which implies that $a_n=0.$ Similarly we can show that $a_1=a_2=\cdots=a_n=0$ which is a  contradiction. Therefore $\deg(x^{\underline{a}})\geq t+1.$ Thus the degree of $x^{\underline a+\underline b}$ is at least $(t-1)+(t+1)=2t.$ Now in $x^{\underline a+\underline b}$ if the ancillary is of degree $0$ then nothing to prove. Let us assume that the ancillary is of degree $1$. Then  $x^{\underline a+\underline b}=x_1\prod_{i,j}e_{ij}^{b_{ij}}.$ which implies that  $\deg(\prod_{i,j}e_{ij}^{b_{ij}})\geq 2t-1$. Hence $\deg(\prod_{i,j}e_{ij}^{b_{ij}})\geq 2t.$ Thus $x^{\underline a+\underline b}\in I^t.$
\end{proof}

 \begin{thm}
 Let $G $ be a complete graph with $n$ vertices and $I=I(G)$ be the edge ideal of $G$. Then for any $s\geq 1,$ we have $$\reg I^{(s)}=\reg I^s.$$
 \end{thm}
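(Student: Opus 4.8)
The plan is to prove the two inequalities $\reg I^{(s)} \leq \reg I^s$ and $\reg I^s \leq \reg I^{(s)}$ separately. The inequality $\reg I^s \leq \reg I^{(s)}$ is typically the easier direction: since $I^s \subseteq I^{(s)}$ with the quotient $I^{(s)}/I^s$ concentrated in degrees close to the generating degrees, one expects regularity to behave well. However, the genuinely useful structural input is Lemma \ref{artinian}, which gives $\m^{s-1} I^{(s)} \subseteq I^s$. I would exploit this as follows. First, recall the well-known formula for the regularity of powers of edge ideals of complete graphs (equivalently, since $I(K_n)$ is the ideal of $2\times 2$ minors' squarefree analogue, its powers have linear resolution for graphs with no induced $4$-cycles, or one cites the explicit value $\reg I^s = 2s$ for the complete graph). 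So the target reduces to showing $\reg I^{(s)} = 2s$ as well.

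First I would establish the inclusion chain $I^s \subseteq I^{(s)} \subseteq I^s : \m^{s-1}$, where the right containment is exactly the content of Lemma \ref{artinian}. The plan is to use the short exact sequence
\begin{equation*}
0 \longrightarrow I^s \longrightarrow I^{(s)} \longrightarrow I^{(s)}/I^s \longrightarrow 0,
\end{equation*}
together with the standard fact that $\reg I^{(s)} \leq \max\{\reg I^s, \reg(I^{(s)}/I^s)\}$ when the connecting degrees align, and the companion inequality from the sequence giving the reverse bound. The module $I^{(s)}/I^s$ is generated, by the structure theorem, by the elements of $(D(s))$ modulo $I^s$; from the explicit description of $D_{\min}(s)$ and the computation $\alpha(I^{(s)}) = s + \lceil s/(n-1)\rceil$ in Theorem \ref{4completeinvariant}, I would bound the degrees of these generators and their syzygies. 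The key point is that $\m^{s-1}$ annihilates $I^{(s)}/I^s$, so this quotient is supported only at the maximal ideal, i.e.\ it is a module of finite length (Artinian), which is why Lemma \ref{artinian} is titled as it is.

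Because $I^{(s)}/I^s$ has finite length, its regularity equals the top degree in which it is nonzero, and that top degree is controlled by the largest degree of a minimal generator of $(D(s))$ together with the exponent $s-1$ from $\m^{s-1}$. I would compute this top degree explicitly and check it does not exceed $2s-1$, so that it contributes nothing above $\reg I^s = 2s$ in the long exact sequence in local cohomology (or in the mapping-cone/Tor computation). Concretely, tensoring the inclusions with the Koszul complex on $x_1,\dots,x_n$ and reading off $\reg$ via the vanishing of $\Tor_i(R/I^{(s)}, k)_j$ for $j$ large is the cleanest route. The main obstacle I anticipate is controlling $\reg(I^{(s)}/I^s)$ precisely: one must verify not merely that the quotient is Artinian but that its socle degree is at most $2s-1$, which requires combining the generator degrees from $D_{\min}(s)$ with a careful accounting of when multiplication by $\m^{s-1}$ lands inside $I^s$ (the two-case analysis — ancillary of degree $\geq 2$ versus $\leq 1$ — in the proof of Lemma \ref{artinian} is exactly what pins this down). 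Once that degree bound is secured, both regularity inequalities follow from the short exact sequence, giving $\reg I^{(s)} = 2s = \reg I^s$ for all $s \geq 1$.
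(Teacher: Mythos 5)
Your skeleton --- Lemma \ref{artinian} forces $I^{(s)}/I^s$ to have finite length, $\reg I^s=2s$ since $K_n$ is co-chordal, then a short exact sequence --- is the same starting point as the paper's, but the two places where you diverge each contain a genuine gap. First, your bound on the top degree of $I^{(s)}/I^s$ does not follow from the data you invoke: knowing that the minimal generators coming from $D_{\min}(s)$ have degree at most $2s-1$ and that $\m^{s-1}$ annihilates the quotient only bounds the socle degree by $(2s-1)+(s-2)=3s-3$, which exceeds $2s-1$ once $s\geq 3$. The bound $(I^{(s)}/I^s)_j=0$ for $j\geq 2s$ \emph{is} true, but for a different reason, namely $(L(s))=I^s$ for complete graphs (the paper cites \cite[Theorem 6.4]{janssen2017comparing} for this): every monomial of $I^{(s)}$ of degree $\geq 2s$ already lies in $I^s$. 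Second, the lower bound $\reg I^{(s)}\geq 2s$ does not ``follow from the short exact sequence'': once $\reg(I^{(s)}/I^s)\leq 2s-1$, the companion inequality $\reg I^s\leq\max\{\reg I^{(s)},\ \reg(I^{(s)}/I^s)+1\}$ is satisfied vacuously by the second entry and yields no information. You need a separate argument here; the paper cites \cite[Theorem 4.6]{gu2018symbolic}, and alternatively one can check directly that $x_1^sx_2^s$ is a minimal generator of $I^{(s)}$ of degree $2s$ (no proper divisor $x_1^ax_2^b$ can satisfy $a\geq s$ and $b\geq s$).

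It is worth contrasting your route with the paper's, which is arranged precisely to avoid your first gap. The paper uses the sequence $0\to I^{(s)}/I^s\to R/I^s\to R/I^{(s)}\to 0$ and local cohomology: finite length of $I^{(s)}/I^s$ alone gives $H^i_{\m}(R/I^{(s)})\cong H^i_{\m}(R/I^s)$ for $i\geq 1$ and a surjection $H^0_{\m}(R/I^s)\twoheadrightarrow H^0_{\m}(R/I^{(s)})$, hence $a_i(R/I^{(s)})\leq a_i(R/I^s)$ for all $i$ and $\reg I^{(s)}\leq\reg I^s$ with no control on the degrees of $I^{(s)}/I^s$ required. If you repair your two gaps as indicated (import $(L(s))=I^s$ for the socle bound and supply the degree-$2s$ minimal generator for the lower bound), your Tor/Koszul version goes through and is a legitimate alternative; as written, both halves of the conclusion rest on unproved claims.
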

 \begin{proof}
   By Lemma \ref{artinian} we have $\m^{s-1}I^{(s)}\subseteq I^s,$ so $I^{(s)}/I^s$ is an Artinian module. Therefore $\dim {I^{(s)}}/{I^s}=0$ and hence $H_{\m }^i(I^{(s)}/I^s)=0$ for $i>0.$ Consider the following short exact sequence
   $$0\rightarrow I^{(s)}/I^s \rightarrow R/I^s \rightarrow R/I^{(s)}\rightarrow 0. $$
   Applying local cohomology functor we get $H_{\m}^i(R/I^{(s)})\cong H_{\m}^i(R/I^{s})$ for $i\geq 1$ and the following short exact sequence
   \begin{equation} \label{exact}
0\rightarrow H_{\m}^0(I^{(s)}/I^s) \rightarrow H_{\m}^0(R/I^s) \rightarrow H_{\m}^0(R/I^{(s)})\rightarrow 0.
   \end{equation}

   Now from $(\ref{exact})$  it follows that $a_0(R/I^{(s)})\leq a_0(R/I^s).$
   So we can conclude that $\reg R/I^{(s)}=\max\{a_i(R/I^{(s)})+i~|~i\geq 0\}\leq\max\{a_i(R/I^{(s)})+i~|~i\geq 0\}=\reg(R/I^s). $ Therefore $\reg I^{(s)}\leq \reg I^s.$
   It follows from \cite[Theorem 4.6]{gu2018symbolic} that $\reg I^{(s)}\geq 2s.$ As complete graph is co-chordal graph, so by \cite{herzog2004} $\reg I^{s}= 2s.$ Thus we have $\reg I^{(s)}=\reg I^s.$

 \end{proof}

 \end{document}